\newtheorem{thm}{Theorem}[section]
\newtheorem{cor}[thm]{Corollary}
\newtheorem{lem}[thm]{Lemma}
\newtheorem{prop}[thm]{Proposition}
\newenvironment{proof}{\noindent {\bf
Proof.}}{\rule{3mm}{3mm}\par\medskip}
\def\marker{\>\hbox{${\vcenter{\vbox{
    \hrule height 0.4pt\hbox{\vrule width 0.4pt height 6pt
    \kern6pt\vrule width 0.4pt}\hrule height 0.4pt}}}$}\>}
\def\qed{ \hfill $\square$}
\newcommand{\es}{\chi_S}
\newcommand{\cs}{\chi_{S'}}
\newcommand{\diam}{{\rm diam}}
\DeclareMathOperator{\a13} {{\cal S}_{1, \overline{3}}}
\DeclareMathOperator{\b14}{{\cal S}_{1, \overline{4}}}
\DeclareMathOperator{\c134}{{\cal S}_{1, 3, \overline{4}}}
\DeclareMathOperator{\d133}{{\cal S}_{1, 3, 3}}
\begin{document}

\title{ A characterization of 4-$\chi_S$-vertex-critical graphs for packing sequences with $s_1 =1$ and $s_2\ge 3$}
\author{Sandi Klav\v{z}ar$^{1,2,3,}$\thanks{Email: \texttt{sandi.klavzar@fmf.uni-lj.si}}
\and
Hui Lei$^{4,}$\thanks{Email: \texttt{hlei@nankai.edu.cn}}
\and
Xiaopan Lian$^{5,}$\thanks{Corresponding author; Email: \texttt{xiaopanlian@mail.nankai.edu.cn}}
\and
Yongtang Shi$^{5,}$\thanks{Email: \texttt{shi@nankai.edu.cn}}
}
\maketitle

\begin{center}
	$^1$ Faculty of Mathematics and Physics, University of Ljubljana, Slovenia \\
	\medskip

	$^2$ Faculty of Natural Sciences and Mathematics, University of Maribor, Slovenia\\
	\medskip

	$^3$ Institute of Mathematics, Physics and Mechanics, Ljubljana, Slovenia\\
	\medskip

	$^4$ School of Statistics and Data Science, LPMC and KLMDASR\\
             Nankai University, Tianjin 300071, China\\
	\medskip

    $^5$ Center for Combinatorics and LPMC, Nankai University, Tianjin, China\\
	\medskip
\end{center}

\begin{abstract}
If $S=(s_1,s_2,\ldots)$ is a non-decreasing sequence of positive integers, then the  $S$-packing $k$-coloring of a graph $G$ is a mapping $c: V(G)\rightarrow[k]$ such that if $c(u)=c(v)=i$ for $u\neq v\in V(G)$, then $d_G(u,v)>s_i$. The $S$-packing chromatic number of $G$ is the smallest integer $k$ such that $G$ admits an $S$-packing $k$-coloring. A graph $G$ is $\chi_S$-vertex-critical if $\chi_S(G-u) < \chi_S(G)$ for each $u\in V(G)$. If $G$ is $\chi_S$-vertex-critical and $\chi_S(G) = k$, then $G$ is $k$-$\chi_S$-vertex-critical. In this paper, $4$-$\chi_S$-vertex-critical graphs are characterized for sequences $S = (1,s_2, s_3, \ldots)$ with $s_2 \ge 3$. There are $28$ sporadic examples and two infinite families of such graphs.
\end{abstract}

\noindent
{\bf Keywords}: graph coloring; distance in graph; $S$-packing coloring; $S$-packing chromatic number; $S$-packing chromatic vertex-critical graph

\medskip\noindent
{\bf AMS Subj.\ Class.\ (2020)}:  05C15, 05C12

\section{Introduction}

A {\it packing $k$-coloring} of a graph $G=(V(G),E(G))$ is a mapping $c:V(G)\rightarrow [k]$ such that if $u\neq v$ and $c(u)=c(v)=i$, then $d_G(u,v)>i$. Here and later, $d_G(u,v)$ denotes the length of a shortest $u,v$-path, and $[k] = \{1,\ldots, k\}$. The {\it packing chromatic number}, $\chi_\rho(G)$, of $G$ is the smallest integer $k$ such that $G$ admits a packing $k$-coloring. This concept was proposed in~\cite{goddard-2008}. The seminal paper was followed by~\cite{BSD}, where the nowadays established name and notation was proposed. The development on the packing chromatic number up to 2020 has been summarized in the substantial survey~\cite{BJS}. Research into this concept is still flourishing, the developments after the survey include~\cite{bidine-2021, bozovic-2021, bresar-2020, deng-2021, fresan-2021}.

A more general concept is the $S$-packing coloring. Let $S=(s_1,s_2,\ldots)$ be a non-decreasing sequence of positive integers; we will refer to $S$ as a {\em packing sequence}. An {\it $S$-packing $k$-coloring} of $G$ is a mapping $c:V(G)\rightarrow [k]$ such that if $u\neq v$ and $c(u)=c(v)=i$, then $d_G(u,v)>s_i$. For example, a $(1,1,1,\ldots)$-packing coloring is the standard proper vertex coloring, and if $S=(1,2,3,\ldots)$, then it is just the packing coloring.  The {\it$S$-packing chromatic number}, $\es(G)$, of $G$ is the smallest integer $k$ such that $G$ admits an $S$-packing $k$-coloring. This concept was introduced by Goddard and Xu~\cite{goddard-2012}; for more results see \cite{bresar-2021, gastineau-2015a, gastineau-2019, goddard-2014, kostochka-2021, liu-2020, yang-2023}.

If $S_1=(s^1_1,s^1_2,\ldots)$ and $S_2=(s^2_1,s^2_2,\ldots)$ are (packing) sequences with $|S_1| = |S_2|$, then $S_2\succeq S_1$ means the coordinate order, that is, $S_2\succeq S_1$ if $s^2_i\geq s^1_i$ for every $i\in [|S_1|]$. If $S_2\succeq S_1$ and $G$ admits an $S_2$-packing $k$-coloring, then $G$ also admits an $S_1$-packing $k$-coloring. In~\cite[Theorem 3.1]{gastineau-2015a}, Gastineau proved the following appealing dichotomy result: If $S$ is a packing sequence with $|S| = 4$, then the decision problem whether a given graph $G$ admits an $S$-packing coloring is polynomial-time solvable if $S\succeq S'$, where $S' \in \{(2, 3, 3,  3), (2, 2, 3, 4), (1, 4, 4, 4), (1, 2, 5, 6)\}$, and NP-complete otherwise.

We have now arrived at the central concept of interest in this paper. A graph $G$ is {\it packing chromatic vertex-critical} if $\chi_\rho(G-u)<\chi_\rho(G)$ holds for each $u\in V(G)$. When $\chi_\rho(G)=k$, we more precisely say that $G$ is $k$-$\chi_\rho$-{\it vertex-critical}. More generally, if $S$ is a packing sequence, then $G$ is {\it $S$-packing chromatic vertex-critical} if $\es(G-u)<\es(G)$ holds for each $u\in V(G)$, and if  $\es(G)=k$, then we say that $G$ is $k$-$\es$-{\it vertex-critical}. We also add that a closely related concept of {\it packing chromatic critical graphs}, where the packing chromatic number strictly decreases on an arbitrary proper subgraph, has been studied in~\cite{bresar-2019++}.

Packing chromatic vertex-critical graphs were introduced in~\cite{klavzar-2019}. Among other results, $3$-$\chi_\rho$-vertex-critical graphs were characterized and a partial characterization of $4$-$\chi_\rho$-vertex-critical graphs was provided. The latter characterization has been completed in~\cite{ferme-2021+}. In \cite{holub-2020}, $3$-$\es$-vertex-critical graphs were characterized for all possible packing sequences, while $4$-$\es$-vertex-critical graphs were characterized for packing sequences ($s_1, s_2, s_3, \ldots)$ with $s_1\ge 2$. In this article we supplement the latter result by characterizing 4-$\es$-vertex-critical graphs for packing sequences with $s_1=1$ and $s_2\ge3$. The result is given in Section~\ref{sec:main}, while in the next section we introduce some additional notation and list known properties of $S$-packing colorings needed here.

\section{Preliminaries}
\label{sec:prelim}

If $G$ is a graph, then we use $n(G)$ to denote its order,  $\diam(G)$ to denote its diameter, and $\chi(G)$ to denote its chromatic number.  For $x\in V(G)$, let $N^i_G(x)$ be the set of vertices which are at distance $i$ from $x$ in $G$. In particular, $N_G(x) = N^1_G(x)$ is the neighborhood of $x$. The degree of $x$ is $d_G(x) = |N_G(x)|$. Let $C_n$, $P_n$, and $K_n$ denote the cycle, the path, and the complete graph on $n$ vertices, respectively. A set $A\subseteq V(G)$ is $k$-{\it independent} if $A$ induces a subgraph that can be properly colored by $k$ colors. Let $\alpha_k(G)$ be the cardinality of a largest $k$-independent set of $G$.

If in a packing sequence the term $i$ repeats $\ell$ times, we may abbreviate the corresponding subsequence by $i^\ell$. For example, if $S = (1,\ldots,1,s_{\ell+1},\ldots)$ (where clearly $1$ appears $\ell$ times), then we may shortly write $S = (1^\ell,s_{\ell+1},\ldots)$. If $\varphi:V(G)\rightarrow [k]$ is an $S$-packing $k$-coloring  of $G$, then $\varphi^{-1}(i)$, $i\in[k]$, is the set of vertices $x$ with $\varphi(x) = i$. We will also use the following convention. Consider the vertex set $V(G) = \{v_1, \ldots, v_n\}$ of $G$ as an ordered set, and let $\varphi$ be an $S$-packing coloring of $G$. Then we will explicitly describe $\varphi$ as follows: $\varphi =  ``\varphi(v_1)~\cdots~\varphi(v_n)"$. Typically, the order of vertices will be alphabetic. For instance, if $V(G) = \{a,b,c,d\}$, and $\varphi(a)=1$, $\varphi(b)=2$, $\varphi(c)=1$, and $\varphi(d)=3$, then $\varphi = ``1~2~1~3"$.

We next recall some known results that will be needed in the rest.

\begin{prop}{\rm \cite{goddard-2008}}
\label{spc7}
Let $n\ge 3$. If $n=3$ or $n=4k$, $k\ge 1$, then $\chi_\rho(C_n)= 3$; otherwise $\chi_\rho(C_n)=4$.
\end{prop}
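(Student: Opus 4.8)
The plan is to organize everything around how often color $3$ is used, proving the lower and upper bounds separately.

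First I would record the easy bound $\chi_\rho(C_n)\ge 3$ for every $n\ge 3$: a class of color $1$ is independent, hence of size at most $\lfloor n/2\rfloor$, while a class of color $2$ must be pairwise at distance $\ge 3$, hence of size at most $\lfloor n/3\rfloor$; since $\lfloor n/2\rfloor+\lfloor n/3\rfloor\le 5n/6<n$, two colors never suffice. For the upper bound in the first case I would color $C_3=K_3$ by $``1\ 2\ 3"$, and for $n=4k$ use the periodic pattern $``1\ 2\ 1\ 3"$ repeated $k$ times: the $1$'s occupy alternate vertices, while consecutive $2$'s and consecutive $3$'s are spaced $4$ apart, meeting the required distances $>2$ and $>3$.

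The heart of the argument is showing that $3$ colors fail when $n\ge 5$ and $4\nmid n$. Suppose $C_n$ had a packing $3$-coloring; since two colors never suffice, color $3$ is used at least once. The key claim is that any two consecutive color-$3$ vertices are at distance exactly $4$. Their distance is $\ge 4$ by the constraint on color $3$. For the reverse inequality I would examine the arc strictly between two consecutive $3$'s, whose vertices carry only colors $1$ and $2$: color $1$ is independent, so no two are adjacent, and two color-$2$ vertices on this arc would be at distance $\ge 3$ with only $1$'s between them, forcing two adjacent $1$'s, a contradiction. Hence the arc contains at most one $2$; an arc with at most one $2$ and no two adjacent $1$'s has at most $3$ internal vertices, so its length is at most $4$. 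Thus every gap between consecutive $3$'s equals $4$, these $n_3\ge 2$ gaps partition the cycle, and $n=4n_3$, contradicting $4\nmid n$. The degenerate case $n_3=1$ is excluded directly, since the whole complement of the single $3$ is a $\{1,2\}$-path containing at most one $2$, forcing $n\le 4$.

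It then remains to exhibit a packing $4$-coloring when $n\ge 5$ and $4\nmid n$. I would start from the block $``1\ 2\ 1\ 3"$ and absorb the surplus $r=n\bmod 4\in\{1,2,3\}$ with one or two vertices of color $4$, whose rare occurrences are automatically legal once placed at distance $>4$; the small base cases $n\in\{5,6,7\}$ are checked by hand, and longer cycles reduce to these by splicing in a full $``1\ 2\ 1\ 3"$ block, an operation that only increases distances between equally colored vertices and so preserves validity. The main obstacle is the lower bound: the delicate point is the uniform gap claim, and in particular the observation that a stretch colored only with $1$ and $2$ cannot contain two $2$'s. This is precisely what collapses every inter-$3$ gap to length $4$ and delivers divisibility by $4$; once this rigidity is established, the upper-bound constructions are routine.
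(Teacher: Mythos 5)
Proposition~\ref{spc7} is quoted in the paper directly from~\cite{goddard-2008}; the paper contains no proof of it, so there is no in-paper argument to compare yours against, and your proof must stand on its own. It does: it is correct and complete in all essential steps. The lower bound $\chi_\rho(C_n)\ge 3$ via $\lfloor n/2\rfloor+\lfloor n/3\rfloor<n$ is fine, the patterns for $n=3$ and $n\equiv 0\bmod 4$ are valid, and the heart of the argument, the rigidity claim, is sound: each gap between consecutive color-$3$ vertices has length at least $4$ by the distance constraint; two $2$'s inside one gap would have only $1$'s between them along the gap, while their graph distance must exceed $2$ (the minimum of the two arc lengths, so the in-gap separation is at least $3$), forcing two adjacent $1$'s; and a $\{1,2\}$-stretch with at most one $2$ and no adjacent $1$'s has at most three internal vertices, so every gap is exactly $4$ and $4\mid n$. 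Your separate disposal of the case of a single color-$3$ vertex (giving $n\le 4$) correctly closes the degenerate case that the gap-counting argument does not cover. Two small points deserve tightening in the routine upper-bound direction. First, the assertion that splicing in a $1\,2\,1\,3$ block ``only increases distances between equally colored vertices'' is literally true only for pairs of pre-existing vertices (the minimum of the two arc lengths cannot decrease when one arc grows); it says nothing about the inserted $2$ and $3$ against the old $2$'s and $3$'s. This is easily repaired by inserting the block immediately after an existing $3$, which amounts to using the explicit patterns $(1\,2\,1\,3)^k\,4$, $(1\,2\,1\,3)^k\,1\,4$, and $(1\,2\,1\,3)^k\,1\,2\,4$ for $n\equiv 1,2,3 \bmod 4$, each directly checkable. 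Second, a single vertex of color $4$ always suffices; ``one or two'' is an overestimate, and two $4$'s would impose an additional distance condition you would otherwise need to verify.
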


\begin{lem}{\rm \cite{goddard-2012}}\label{lem:subgraph}
\label{spc4}
If $S$ is a packing sequence and $H$ is a subgraph of $G$, then $\es(H)\leq\es(G)$.
\end{lem}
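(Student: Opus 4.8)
The plan is to prove the inequality by showing that any $S$-packing $k$-coloring of $G$ restricts to an $S$-packing $k$-coloring of $H$; taking $k=\es(G)$ then yields $\es(H)\le\es(G)$ immediately. The whole argument hinges on a single structural fact about how distances behave under passage to a subgraph, so I would isolate that first.

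The key observation is the monotonicity of distance: for every pair $u,v\in V(H)$ we have $d_H(u,v)\ge d_G(u,v)$. This holds because every $u,v$-path in $H$ is also a $u,v$-path in $G$ (since $V(H)\subseteq V(G)$ and $E(H)\subseteq E(G)$), so the shortest $u,v$-path in $G$ cannot be longer than the shortest one in $H$. When $u$ and $v$ lie in different components of $H$, we read $d_H(u,v)=\infty$ and the inequality holds vacuously. Intuitively, deleting vertices or edges can only stretch distances apart, never shrink them, and this is precisely what makes the $S$-packing constraints easier to satisfy in $H$ than in $G$.

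With this in hand, let $c:V(G)\rightarrow[k]$ be an $S$-packing $k$-coloring with $k=\es(G)$, and consider its restriction $c'=c|_{V(H)}:V(H)\rightarrow[k]$. To verify that $c'$ is a valid $S$-packing coloring of $H$, take any two distinct $u,v\in V(H)$ with $c'(u)=c'(v)=i$. These are vertices of $G$ receiving the same color $i$ under the valid coloring $c$, so $d_G(u,v)>s_i$; combining this with the distance inequality gives $d_H(u,v)\ge d_G(u,v)>s_i$, which is exactly the condition required of color class $i$ in $H$. Hence $c'$ is an $S$-packing $k$-coloring of $H$, and therefore $\es(H)\le k=\es(G)$.

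I do not expect any genuine obstacle here: the result is a soft monotonicity statement, and once the distance inequality $d_H(u,v)\ge d_G(u,v)$ is recorded, the restriction argument is immediate. The only point demanding a little care is the convention for disconnected $H$, which is handled cleanly by setting $d_H(u,v)=\infty$ for vertices in distinct components, so that the constraint is automatically satisfied there.
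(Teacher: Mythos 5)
Your proof is correct. The paper does not actually prove this lemma --- it is quoted as a known result from Goddard and Xu~\cite{goddard-2012} --- and your argument (distance monotonicity $d_H(u,v)\ge d_G(u,v)$ for subgraphs, hence the restriction of an optimal $S$-packing coloring of $G$ to $V(H)$ remains valid, with the $d_H(u,v)=\infty$ convention handling disconnected $H$) is precisely the standard proof of that cited result, with no gaps.
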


\begin{prop}{\rm \cite{goddard-2012}}
\label{spc6}
Let $S = (1^\ell, s_{\ell+1},\ldots)$, where $\ell\ge1$ and $s_{\ell+1}\ge2$, and let $G$ be a graph. Then $\es(G) \le n(G)-\alpha_\ell(G) +\min\{\ell, \chi(G)\}$ with equality if and only if $\diam(G) \le s_{\ell+1}$.
\end{prop}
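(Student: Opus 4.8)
The plan is to treat the two assertions separately: the universal upper bound, obtained by an explicit construction, and the characterization of when it is attained, which splits into a matching lower bound (sufficiency) and a color-saving argument (necessity).

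\emph{The upper bound.} I would prove $\es(G)\le n(G)-\alpha_\ell(G)+\min\{\ell,\chi(G)\}$ by displaying one coloring. Fix a largest $\ell$-independent set $A$, so $|A|=\alpha_\ell(G)$ and $G[A]$ is properly colorable with $\min\{\ell,\chi(G[A])\}\le\min\{\ell,\chi(G)\}$ colors drawn from $\{1,\dots,\ell\}$; then give each of the $n(G)-\alpha_\ell(G)$ vertices outside $A$ its own fresh color from $\{\ell+1,\ell+2,\dots\}$. Every class with a color $\le\ell$ is independent, so the constraint $d_G>s_i=1$ holds, and every color $>\ell$ is a singleton, so its distance constraint is vacuous. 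Hence this is an $S$-packing coloring and the number of colors used is exactly the claimed bound.

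\emph{Sufficiency of $\diam(G)\le s_{\ell+1}$.} Here I would prove the matching lower bound. The key remark is that when $\diam(G)\le s_{\ell+1}$ every color $i\ge\ell+1$ is used at most once, since two vertices of color $i$ would require $d_G>s_i\ge s_{\ell+1}\ge\diam(G)$, which is impossible. Take an optimal coloring using $a$ colors from $\{1,\dots,\ell\}$ and $b$ colors from $\{\ell+1,\dots\}$; the $b$ large colors are singletons, and the set $B$ receiving small colors is $a$-independent, so $|B|\le\alpha_a(G)$ and $\es(G)=a+b=a+n(G)-|B|\ge n(G)+\bigl(a-\alpha_a(G)\bigr)$. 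It then remains to verify $a+\alpha_\ell(G)-\alpha_a(G)\ge\min\{\ell,\chi(G)\}$ for every $0\le a\le\ell$. I would deduce this from two elementary facts: (i) $\chi(G)\le a+(n(G)-\alpha_a(G))$, that is $\alpha_a(G)\le n(G)-\chi(G)+a$, obtained by coloring a largest $a$-independent set with $a$ colors and giving singletons to the rest; and (ii) $\alpha_{j+1}(G)\ge\alpha_j(G)+1$ whenever $j<\chi(G)$, since $j$ colors then fail to cover $V(G)$ and any uncovered vertex extends a maximum $j$-independent set. If $\chi(G)\le\ell$ then $\alpha_\ell(G)=n(G)$ and (i) finishes it; if $\chi(G)>\ell$, summing (ii) over $a\le j<\ell$ gives $\alpha_\ell(G)-\alpha_a(G)\ge\ell-a$, which is precisely what is needed. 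Together with the upper bound this forces equality.

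\emph{Necessity of $\diam(G)\le s_{\ell+1}$.} For the converse I would argue contrapositively: assuming $\diam(G)>s_{\ell+1}$, fix a pair $u,v$ with $d_G(u,v)>s_{\ell+1}$ and try to improve the canonical coloring by assigning $u$ and $v$ a common large color $\ell+1$, thereby spending one color fewer than $n(G)-\alpha_\ell(G)+\min\{\ell,\chi(G)\}$. I expect \emph{this} to be the main obstacle. The saving is immediate when $u$ and $v$ can be placed outside some largest $\ell$-independent set $A$, but in general a distant pair may be forced into every maximum $\ell$-independent set, in which case merely merging their large colors need not lower the total. The delicate point is therefore an exchange argument that frees a distant pair from the small colors—recoloring a boundary vertex of $A$ while maintaining the large classes as legitimate packings—and shows that the net count strictly drops; carefully bounding this trade-off is where the real work lies.
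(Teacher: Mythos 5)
The first thing to note is that the paper contains no proof of Proposition~\ref{spc6} at all: it is quoted from Goddard and Xu~\cite{goddard-2012}, so your attempt can only be judged against the cited source and on its own merits. Your upper-bound construction and your proof of the ``if'' direction are correct and are essentially the standard arguments. One small point to make explicit in the upper bound: if a maximum $\ell$-independent set $A$ satisfies $A\neq V(G)$, then $\chi(G[A])=\ell\le\chi(G)$ (otherwise $A\cup\{v\}$ would still be $\ell$-independent for any $v\notin A$, contradicting maximality); this is what guarantees that the largest color used, $\ell+(n(G)-\alpha_\ell(G))$, does not exceed $n(G)-\alpha_\ell(G)+\min\{\ell,\chi(G)\}$. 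Your facts (i) and (ii), and the case split $\chi(G)\le\ell$ versus $\chi(G)>\ell$, correctly close the lower bound when $\diam(G)\le s_{\ell+1}$.

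For the necessity direction, where you predicted ``the real work lies'': your suspicion was right in the strongest possible sense. The ``only if'' is false as stated, so the exchange argument you were hunting for does not exist. Take $\ell=1$, $G=P_5=abcde$, and any $S\in{\cal S}_{1,3}$, e.g.\ $S=(1,3,3,\ldots)$. Then $n(G)-\alpha(G)+1=5-3+1=3$, and $\chi_S(P_5)=3$: the pattern ``1~2~1~3~1'' is an $S$-packing $3$-coloring, while in a hypothetical $2$-coloring the class of color $2$ could contain at most the pair $\{a,e\}$ (the only pair at distance $>3$), leaving $\{b,c,d\}$, which is not independent, for color $1$. So equality holds even though $\diam(P_5)=4>3=s_2$. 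More degenerately, whenever $\chi(G)\le\ell$ one has $\alpha_\ell(G)=n(G)$ and $\chi_S(G)=\chi(G)$ (every $S$-packing coloring is proper since each class is independent, and any proper $\chi(G)$-coloring uses only $1$-colors), so equality holds regardless of the diameter. The correct statement, which is all this paper ever uses (Lemma~\ref{KCH} applies it to graphs with $\diam(G)\le s_2$, and Theorem~\ref{spcthm2} to $C_7$), is ``with equality \emph{if} $\diam(G)\le s_{\ell+1}$''; the ``if and only if'' appears to be an overstatement in the quotation. So the part of your proof you completed is sound, and the step you could not complete should be deleted rather than finished.
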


\begin{lem}{\rm \cite{klavzar-2019}}
\label{spc5}
If $S$ is a packing sequence and $G$ is a $\es$-vertex-critical graph, then $G$ is connected.
\end{lem}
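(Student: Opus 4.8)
The plan is to argue by contradiction, exploiting the fact that the $S$-packing chromatic number of a disconnected graph is simply the maximum of those of its components. First I would suppose that $G$ is $\es$-vertex-critical yet disconnected, and write $G_1,\ldots,G_m$, with $m\ge 2$, for its connected components.

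The key observation I would establish is that $\es(G)=\max_{1\le i\le m}\es(G_i)$. The inequality $\es(G)\ge\es(G_i)$ for every $i$ is immediate from Lemma~\ref{lem:subgraph}, since each $G_i$ is a subgraph of $G$. For the reverse inequality I would note that any two vertices lying in distinct components are at distance $\infty$ in $G$, so they satisfy $d_G(u,v)>s_\ell$ vacuously for every color $\ell$, while distances between vertices of the same component are unchanged from $G_i$ to $G$. Hence one may take an optimal $S$-packing coloring on each component and paste them together, reusing a common palette of size $\max_i\es(G_i)$, to obtain a valid $S$-packing coloring of $G$; this yields $\es(G)\le\max_i\es(G_i)$.

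With this in hand, let $G_1$ be a component attaining the maximum, so that $\es(G_1)=\es(G)=:k$. Because $m\ge 2$, there is a vertex $u$ lying in some other component, and then $G_1$ is a subgraph of $G-u$. Applying Lemma~\ref{lem:subgraph} once more gives $\es(G-u)\ge\es(G_1)=k=\es(G)$, which contradicts the defining inequality $\es(G-u)<\es(G)$ of vertex-criticality. Therefore $G$ cannot be disconnected.

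This argument is short, and its only delicate point is the pasting step in the second paragraph: one must be certain that the $S$-packing constraints never couple vertices of different components, which is exactly the convention $d_G(u,v)=\infty$ for $u,v$ in distinct components, together with the fact that intra-component distances are not affected by the presence of the other components. Everything else reduces to the monotonicity of $\es$ under taking subgraphs and to the single choice of a deleted vertex lying outside a component of maximum $S$-packing chromatic number.
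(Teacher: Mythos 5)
Your proof is correct and is essentially the standard argument: the paper itself states this lemma without proof, citing \cite{klavzar-2019}, and the proof there proceeds exactly as you do, observing that $\es(G)=\max_i \es(G_i)$ for a disconnected graph (colorings of components can be pasted since inter-component distances are infinite and intra-component distances are unaffected), so that deleting a vertex outside a component attaining the maximum leaves $\es$ unchanged, contradicting vertex-criticality via Lemma~\ref{lem:subgraph}. Nothing further is needed.
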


Finally, the following notation will be useful. Suppose we wish to consider all the packing sequences $S = (s_1, s_2, s_3, \ldots)$, for which $s_1 = 2$, $s_2 \ge 4$, and $s_3 = 5$ hold. We will denote the set of all such packing sequences by ${\cal S}_{2,\overline{4},5}$, that is,
$${\cal S}_{2,\overline{4},5} = \{(s_1,s_2,s_3,\ldots):\ s_1 = 2, s_2 \ge 4, s_3 = 5\}\,.$$
Note that since ${\cal S}_{2,\overline{4},5}$ is a set of packing sequences, we have $s_2\in \{4,5\}$ when $S\in {\cal S}_{2,\overline{4},5}$. The general notation should be clear from this example. For instance, using this notation we can state that $S \succeq (s_1,s_2,s_3,\ldots)$ if and only $S\in {\cal S}_{\overline{s}_1,\overline{s}_2, \overline{s}_3, \ldots}$.

%%%%%%%%%%%%%%%%%%%%%%%%%%%%%%%%%%%%%%%%%%%%
\section{Vertex-critical graphs for different packing sequences}
\label{sec:main}
%%%%%%%%%%%%%%%%%%%%%%%%%%%%%%%%%%%%%%%%%%%%

As mentioned in the introduction, a characterization of $3$-$\es$-vertex-critical graphs is known for all possible packing sequences, while $4$-$\es$-vertex-critical graphs were by now characterized for packing sequences from ${\cal S}_{\overline{2}}$. In this section we supplement the latter result by characterizing 4-$\es$-vertex-critical graphs for packing sequences $S$ from ${\cal S}_{1, \overline{3}}$. To this end note that
$$\a13 = \b14 \cup \c134 \cup \d133\,.$$
In view of this fact we will solve our problem by characterizing 4-$\es$-vertex-critical graphs for packing sequences from each of the sets $\b14$, $\c134$, and $\d133$.

In Figs.~\ref{spcfig1} and~\ref{spcfig2}, several graphs are drawn that will turn out to be 4-$\es$-vertex-critical for packing sequences from ${\cal S}_{1, \overline{3}}$. Fig.~\ref{spcfig1} contains two small families of graphs, the family $\mathcal{C}_5$ contains four graphs of order $5$, while $\mathcal{C}_6$ contains three graphs of order $6$. Fig.~\ref{spcfig2} displays the family of graphs ${\cal H}$ consisting of graphs $H_i$, $i\in [15]$.

\begin{figure}[ht!]
  \begin{center}
    \includegraphics[width=8cm]{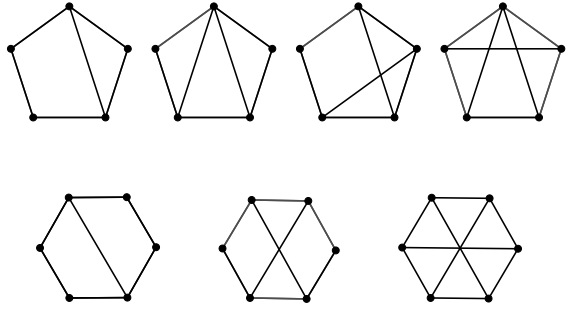}\\
  \caption{Family $\mathcal{C}_5$ (top row) and family $\mathcal{C}_6$ (bottom row)}
  \label{spcfig1}
  \end{center}
\end{figure}

\begin{figure}[ht!]
  \begin{center}
    \includegraphics[width=14cm]{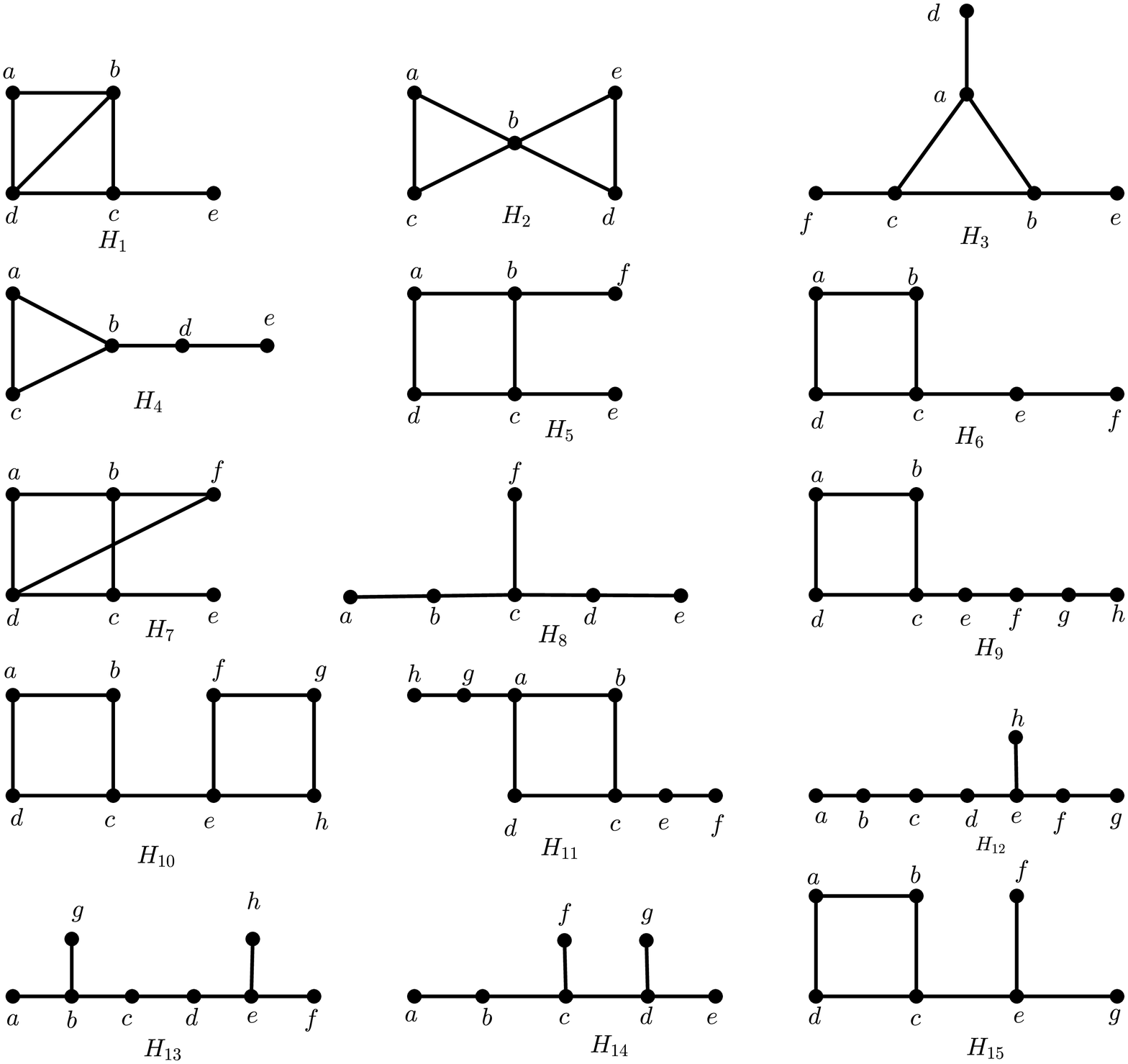}\\
  \caption{Family ${\cal H} = \{H_i:\ i\in [15]\}$}
  \label{spcfig2}
  \end{center}
\end{figure}

In the rest we will frequently consider different subsets of ${\cal H}$. To shorten the presentation, we will specify subsets of ${\cal H}$ by (ranges of) indices. For instance, ${\cal H}_{1-3,7,9-11} = \{H_1, H_2, H_3, H_7, H_9, H_{10}, H_{11}\}$.

First we detect the following critical graphs.

\begin{lem}\label{KCH}
Let $S\in \a13$. Then each of the graphs from
$\mathcal{G} = \{K_4, C_5, C_6\} \cup
\mathcal{C}_5 \cup
\mathcal{C}_6 \cup
{\cal H}_{1-5,7}$
is $4$-$\es$-vertex-critical.
\end{lem}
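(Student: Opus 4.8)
The plan is to verify, for each fixed graph $G$ in the list and each $S\in\a13$, the two defining conditions separately: that $\es(G)=4$ and that $\es(G-u)\le 3$ for every $u\in V(G)$. The whole argument rests on Proposition~\ref{spc6} specialised to $\ell=1$. Since $s_2\ge 3\ge 2$, that proposition applies to $S=(1,s_2,\ldots)$, and because $\min\{1,\chi(G)\}=1$ and $\alpha_1(G)=\alpha(G)$ for every graph $G$, it reads
\[
\es(G)\le n(G)-\alpha(G)+1,
\]
with equality exactly when $\diam(G)\le s_2$. The crucial point is that this bound involves only $s_2$, never the higher terms $s_3,s_4,\dots$, so a single argument covers all $S\in\a13$ at once.

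First I would reduce $\es(G)=4$ to two purely combinatorial facts about $G$: that $\diam(G)\le 3$ and that $\alpha(G)=n(G)-3$. Indeed, $\alpha(G)=n(G)-3$ turns the displayed bound into $\es(G)\le 4$, and $\diam(G)\le 3\le s_2$ forces equality, so $\es(G)=4$ for every $S\in\a13$ simultaneously. The hypothesis $\diam(G)\le 3$ is precisely what makes the lower bound uniform in $s_2$: were the diameter $4$, the sequence with $s_2=3$ might admit a cheaper colouring and spoil $\es(G)=4$. For the criticality condition I would use only the upper bound of Proposition~\ref{spc6}, which holds with no restriction on the diameter: applied to $G-u$ it gives
\[
\es(G-u)\le \bigl(n(G)-1\bigr)-\alpha(G-u)+1=n(G)-\alpha(G-u).
\]
Hence $\es(G-u)\le 3$ follows as soon as $\alpha(G-u)\ge n(G)-3=\alpha(G)$, that is, as soon as $\alpha(G-u)=\alpha(G)$. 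So the second reduction is: for each $u$, some maximum independent set of $G$ avoids $u$ (equivalently, no vertex lies in every maximum independent set).

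What remains is a finite check, over $\{K_4,C_5,C_6\}\cup\mathcal{C}_5\cup\mathcal{C}_6\cup{\cal H}_{1-5,7}$, of the three conditions $\diam(G)\le 3$, $\alpha(G)=n(G)-3$, and $\alpha(G-u)=\alpha(G)$ for all $u$. This is routine for the named graphs: $K_4,C_5,C_6$ are vertex-transitive with $\alpha=n-3$, and deleting a vertex yields $K_3,P_4,P_5$, each with the same independence number as its parent, so all three conditions hold. For the members of $\mathcal{C}_5$, $\mathcal{C}_6$ and ${\cal H}_{1-5,7}$ I would read off the adjacencies from Figs.~\ref{spcfig1} and~\ref{spcfig2}, confirm $\diam\le 3$ and $\alpha=n-3$, and then exhibit a maximum independent set missing each vertex, shrinking the work by using each graph's automorphisms. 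Equivalently, one may display the colourings in the paper's ``$\varphi=\cdots$'' notation: a $4$-colouring of $G$ using colour $1$ on a maximum independent set and colours $2,3,4$ once each, and for every $u$ a $3$-colouring of $G-u$ of the same shape.

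I expect the only real obstacle to be bookkeeping rather than ideas. All the content sits in Proposition~\ref{spc6}; the labour is the per-graph verification, and its delicate part is checking $\alpha(G-u)=\alpha(G)$ at \emph{every} vertex, since a vertex lying in all maximum independent sets would break criticality for the sequences with large $s_2$. Making sure $\diam(G)\le 3$ (so that $\es(G)=4$ holds uniformly for every $s_2\ge 3$) is the other point that must not be overlooked; exploiting the symmetry of each graph to reduce to one representative per orbit should keep the casework manageable.
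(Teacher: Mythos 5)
Your proposal is correct, and on the first half it coincides with the paper: both establish $\es(G)=4$ by Proposition~\ref{spc6} with $\ell=1$, using $\alpha(G)=n(G)-3$ and $\diam(G)\le 3\le s_2$ to force equality uniformly over all $S\in\a13$. Where you genuinely diverge is the criticality check. The paper argues exactly as you do for $K_4$, $C_5$, $C_6$, $\mathcal{C}_5$, $\mathcal{C}_6$ (computing $\es(G-x)=n(G-x)-\alpha(G-x)+1=3$ from Proposition~\ref{spc6}), but for the graphs in ${\cal H}_{1-5,7}$ it instead exhibits explicit $S$-packing $3$-colorings of every vertex-deleted subgraph for $S\in\b14$, and then transfers these to $S\in\c134\cup\d133$ by observing that each such $S$ is dominated by some $S'\in\b14$ with $S'\succeq S$. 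You replace both the hand-built colorings and this monotonicity step by a single uniform criterion: since the upper bound $\es(G-u)\le n(G-u)-\alpha(G-u)+1$ holds with no diameter hypothesis, $\es(G-u)\le 3$ follows from $\alpha(G-u)=\alpha(G)=n(G)-3$, i.e.\ from the fact that no vertex of $G$ lies in every maximum independent set. This is sound, and the three conditions you isolate do hold for all members of $\mathcal{G}$, including $H_1,\ldots,H_5,H_7$ (for instance $H_3$ is the net: deleting a pendant vertex leaves its triangle neighbour's antipodal pendants plus a triangle vertex independent, and deleting a triangle vertex leaves all three pendants independent, so $\alpha$ never drops below $3$). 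Your route buys uniformity — one argument for every $S\in\a13$ and every graph, with no coloring patterns to verify — at the price of computing $\alpha(G-u)$ for each vertex; the paper's explicit colorings serve as self-contained certificates that need no independence-number bookkeeping, but cost a case split over sequence families. The only thing left implicit in your write-up is the finite per-graph verification itself (understandably, since it depends on reading Figs.~\ref{spcfig1} and~\ref{spcfig2}); you state the correct conditions to check, and they do all hold, so this is a matter of routine bookkeeping rather than a gap.
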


\begin{proof}
%Since $s_2\ge 3$,
Observe that for each $G\in \mathcal{G}$, $\diam(G)\leq s_2$. Using Proposition~\ref{spc6} it is then straightforward to check that $\es(G)=4$ for each $G\in \mathcal{G}$. It remains to show that each graph $G\in \mathcal{G}$ is 4-$\es$-vertex-critical.

By Proposition~\ref{spc6}, we have $\es(K_3)=3-1+1=3$, $\es(P_k)\le k-\alpha(P_k)+1\le3$ for $k\le5$, $\es(G-x)=4-2+1=3$ for any $G\in \mathcal{C}_5$ and $x\in V(G)$, and $\es(G-x)=5-3+1=3$ for any $G\in \mathcal{C}_6$ and $x\in V(G)$. Therefore, $K_4$, $C_5$, $C_6$, each of the graphs from $\mathcal{C}_5$,  and each of the graphs from $\mathcal{C}_6$ are $\es$-vertex-critical.

Now we prove that each graph in ${\cal H}_{1-5,7}$ is 4-$\chi_S$-vertex-critical where $S\in  \a13$. First consider the case where $S\in \b14$.  We give an $S$-packing 3-colorings $\varphi$ for every $G-x$, where $G\in {\cal H}_{1-5,7}$ and $x\in V(G)$. (By symmetry, we do not need to consider all the vertices.) Suppose $G=H_1$. Then we define $\varphi$ as ``1 2 3 1",``1 1 3 2",``1 2 3 2",``1 2 1 3" when $x=a,b,c,e$, respectively. Suppose $G=H_2$. Then we define $\varphi$ as ``2 1 1 3", when $x=a$ or $x=b$. Suppose $G=H_3$. Then we define $\varphi$ as ``2 3 1 1 1", ``1 2 3 1 1" when $x=a,d$, respectively. Suppose $G=H_4$. Then we define $\varphi$ as ``3 1 1 2", ``2 1 1 2", ``2 1 3 2", ``2 3 1 1" when $x=a,b,d,e$, respectively. Suppose $G=H_5$. Then we define $\varphi$ as ``1 2 1 1 3", ``3 2 1 1 3", ``3 1 2 1 1" when $x=a,b,f$, respectively. Suppose $G=H_7$. Then we define $\varphi$ as ``1 2 1 1 3", ``1 2 3 1 1", ``2 1 1 1 3", ``1 2 1 3 1", when $x=a,b,c,e$, respectively.  We have thus verified that each $G\in {\cal H}_{1-5,7}$ is $\chi_{S}$-vertex-critical for $S\in \b14$.

Finally suppose $S\in \c134 \cup \d133$. Let $G\in {\cal H}_{1-5,7}$ and $x\in V(G)$ be an arbitrary vertex. Since $\chi_S(G)=4$, it suffices to show that $\chi_S(G-x)=3$.  Observe that for any packing sequence $S\in \c134 \cup \d133$ there is a packing sequence $S'\in \b14$ such that $S' \succeq S$. Thus, the above $S'$-packing $3$-coloring of $G - x$, where $G\in {\cal H}_{1-5,7}$ and $x\in V(G)$, yields an $S$-packing $3$-coloring of $G - x$. Therefore, we are finished.
\end{proof}

%%%%%%%%%%%%%%%%%%%%%%%%%%%%%%%%%%%%%%%%
\subsection{$4$-$\es$-vertex-critical graphs for $S\in \b14 \cup \c134$}
%%%%%%%%%%%%%%%%%%%%%%%%%%%%%%%%%%%%%%%%

In this subsection we characterize 4-$\es$-vertex-critical graphs for $S\in \b14$ and for $S\in \c134$. The results are given in Theorems~\ref{spcthm1}, \ref{spcthm2}, and \ref{spca}.

\begin{lem}\label{HH68}
$P_6$, $H_6$, and $H_8$ are $4$-$\es$-vertex-critical graphs for $S\in \b14$.
\end{lem}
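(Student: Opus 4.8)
The plan is to verify, for each $G \in \{P_6, H_6, H_8\}$ and every $S \in \b14$, the two defining conditions of $4$-$\es$-vertex-criticality: that $\es(G) = 4$, and that $\es(G - u) \le 3$ for every $u \in V(G)$. Throughout I would apply Proposition~\ref{spc6} with $\ell = 1$, which is legitimate since $s_1 = 1$ and $s_2 \ge 4 \ge 2$; it yields $\es(F) \le n(F) - \alpha(F) + 1$ for every graph $F$, uniformly in $S$. The first step is to record $n(G)$, $\alpha(G)$, and $\diam(G)$ for the three graphs. For $P_6$ these are $6$, $3$, and $5$, and I would check that $H_6$ and $H_8$ likewise satisfy $n(G) - \alpha(G) = 3$. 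The upper bound $\es(G) \le 4$ is then immediate.

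For the lower bound $\es(G) \ge 4$ I would argue directly, simultaneously for all $S \in \b14$, rather than splitting on the value of $s_2$. The crucial observation is that any color class with color $i \ge 2$ consists of vertices that are pairwise at distance $> s_i \ge s_2 \ge 4$, hence at distance $\ge 5$. In $P_6 = v_1 \cdots v_6$ the only pair of vertices at distance $\ge 5$ is $\{v_1, v_6\}$, so each class of color $\ge 2$ is either a single vertex or exactly $\{v_1, v_6\}$, and at most one class can equal $\{v_1, v_6\}$. Supposing $\es(P_6) = 3$ then forces color $1$, an independent set of size $\le \alpha(P_6) = 3$, together with colors $2, 3$ to cover all six vertices. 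Since colors $2, 3$ supply at most three vertices in total, color $1$ must cover at least three; a short case analysis (on whether $\{v_1, v_6\}$ is actually used) shows the vertices left for color $1$ can never be independent, a contradiction. I would carry out the analogous metric count for $H_6$ and $H_8$, using their small diameter to enumerate the admissible distance-$\ge 5$ color classes and then rule out every $3$-coloring by the same covering argument.

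For vertex-criticality I would treat the vertices up to the automorphisms of $G$. Whenever $\alpha(G - u) = \alpha(G)$, Proposition~\ref{spc6} gives $\es(G - u) \le n(G - u) - \alpha(G - u) + 1 = 3$ at once, uniformly in $S$; for $P_6$ every vertex is avoided by some maximum independent set, so this disposes of all cases (the deleted graphs being $P_5$, $K_1 \cup P_4$, and $P_2 \cup P_3$, all with $n - \alpha = 2$). For any vertex $u$ lying in every maximum independent set of $H_6$ or $H_8$, where the bound would only give $\es(G - u) \le 4$, I would instead exhibit an explicit $S$-packing $3$-coloring of $G - u$ valid for all $S \in \b14$: placing colors $2$ and $3$ on single vertices, or on vertices in distinct components of $G - u$ whose distance is infinite, makes their packing constraints vacuous, so only the independence of color $1$ needs to be checked.

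I expect the main obstacle to be the lower bound $\es(G) \ge 4$: it is the only step that does not follow formally from Proposition~\ref{spc6}, and it requires a precise description of the metric structure of $H_6$ and $H_8$, namely which far-apart vertex sets can carry a color $\ge 2$, together with a covering argument that excludes every $3$-coloring. The vertex-removal step is routine once symmetry is used to reduce the cases, the only mild delicacy being vertices that meet every maximum independent set, which are handled by the explicit colorings described above.
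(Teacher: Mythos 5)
Your proposal is correct and takes essentially the same approach as the paper: your $P_6$ lower bound via the unique distance-$5$ pair $\{v_1,v_6\}$ is the paper's argument, and Proposition~\ref{spc6} does the remaining work in both treatments. The only cosmetic deviations are that the paper obtains $\es(H_6)=\es(H_8)=4$ directly from the equality case of Proposition~\ref{spc6} (since $\diam\le 4\le s_2$ and $\alpha=3$) where you redo the singleton-color-class count by hand, and that it certifies criticality of $H_6$ and $H_8$ by explicit per-vertex $3$-colorings where you invoke maximum independent sets avoiding each vertex---a valid shortcut, whose fallback for vertices lying in every maximum independent set is in fact never triggered, as no such vertex exists in either graph.
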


\begin{proof}
Let $S\in \b14$.

First we prove that $P_6$ is 4-$\chi_S$-vertex-critical.  Suppose that $P_6=abcdef$ has an $S$-packing $3$-coloring $\varphi$. Since $|\varphi^{-1}(1)|\leq 3$, we have $|\varphi^{-1}(2)|\geq 2$  or  $|\varphi^{-1}(3)|\geq 2$. Since $s_2\ge 4$, we must have $\varphi(a)=\varphi(f)=\alpha\in\{2,3\}$. Then at least three vertices of $\{b,c,d,e\}$ must receive color $1$, but this is impossible. The pattern $``1~2~1~3~1~4"$ gives an $S$-packing $4$-coloring of $P_6$, so $\es(P_6)=4$. By Proposition~\ref{spc6}, $\es(P_k)\le k-\alpha(P_k)+1\le3$ for $k\le5$. Hence, $P_6$ is 4-$\es$-vertex-critical.

Now we prove that both $H_6$ and $H_8$ are 4-$\chi_S$-vertex-critical. Observe that $ \alpha(H_6)=\alpha(H_8)=3$. By Proposition~\ref{spc6}, we have $\es(H_6) = \es(H_8) = 6-3+1=4$. Now we give  an $S$-packing $3$-coloring $\phi$ of $G-x$ for $G\in \{H_6, H_8\}$ and $x\in V(G)$. If $G= H_6$, then we define $\phi$ as ``1 2 1 1 3", ``1 1 2 3 1", ``2 1 1 1 3", ``3 1 2 1 3", ``3 1 2 1 1" when $x=a,b,c,e,f$, respectively. If $G=H_8$, then we define $\phi$ as ``1 2 1 3 1", ``2 2 1 3 1", ``2 1 1 3 1", ``1 2 1 3 1" when $x=a,b,c,f$, respectively.
\end{proof}

\begin{lem}\label{spc2}
Each of the graphs from $\{P_8, C_8\} \cup {\cal H}_{9,11-15}$ is $4$-$\es$-vertex-critical for $S\in \c134$.
\end{lem}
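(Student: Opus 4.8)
The plan is to verify, for each graph $G$ in $\{P_8,C_8\}\cup{\cal H}_{9,11-15}$, the two defining properties of $4$-$\es$-vertex-criticality for every $S\in\c134$: first that $\es(G)=4$, and second that $\es(G-x)\le 3$ for every $x\in V(G)$. The second property, combined with $\es(G)=4$ and the subgraph monotonicity of Lemma~\ref{lem:subgraph}, is exactly vertex-criticality. Throughout I would exploit that for $S=(1,3,s_3,\ldots)\in\c134$ the class $\varphi^{-1}(1)$ is an independent set, $\varphi^{-1}(2)$ is a set of vertices pairwise at distance at least $4$, and each class $\varphi^{-1}(i)$ with $i\ge 3$ is a set of vertices pairwise at distance more than $s_3\ge 4$.

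For the upper bound $\es(G)\le 4$ I would proceed uniformly by exhibiting an explicit $S$-packing $4$-coloring in which colors $3$ and $4$ are each used on a single vertex while color $2$ is used on vertices pairwise at distance at least $4$; such a coloring is simultaneously valid for every $S\in\c134$, since the constraints on the singleton classes $3$ and $4$ are vacuous and the color-$2$ constraint $d_G>3$ is met. For instance, writing $P_8=v_1\cdots v_8$, the pattern ``1~2~1~3~1~2~1~4" works. Where convenient, the bound can instead be read off Proposition~\ref{spc6} with $\ell=1$ and $s_2=3$: since $\diam(G)>3$ for these graphs the displayed inequality is strict, which yields $\es(G)\le n(G)-\alpha(G)$, and one checks $n(G)-\alpha(G)=4$.

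For the lower bound $\es(G)\ge 4$ it suffices to rule out an $S$-packing $3$-coloring for the coordinatewise smallest sequence $S_0=(1,3,4,\ldots)$, because every $S\in\c134$ imposes an at least as strong distance constraint on color $3$, so a $3$-coloring for any $S$ would restrict to one for $S_0$. Under $S_0$ one has $|\varphi^{-1}(1)|\le\alpha(G)$, while $|\varphi^{-1}(2)|$ is bounded by the largest number of vertices pairwise at distance at least $4$ and $|\varphi^{-1}(3)|$ by the largest number pairwise at distance at least $5$. For several of the graphs these three bounds already sum to less than $n(G)$ — for $C_8$ they give $4+2+1=7<8$ — so no $3$-coloring exists. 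The remaining graphs are the tight ones, where the bounds sum to exactly $n(G)$ (for $P_8$ one gets $4+2+2=8$); there a $3$-coloring would force $\varphi^{-1}(1)$ to be a maximum independent set and colors $2,3$ to attain their extremal sizes, and I would finish by enumerating the few maximum independent sets and checking in each case that the two vertices then forced into color $2$ lie at distance less than $4$, a contradiction.

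For vertex-criticality I would, for each $G$ and each $x$ (reducing the work by symmetry), display a $3$-coloring of $G-x$ with color $3$ on at most one vertex and color $2$ on vertices pairwise at distance at least $4$; as above, such a coloring is a valid $S$-packing $3$-coloring for every $S\in\c134$ at once, so $\es(G-x)\le 3$. I expect the main obstacle to be the lower bound in its tight cases: when the cardinality bound meets $n(G)$ with equality the argument no longer closes by counting alone and requires the case analysis over maximum independent sets, and carrying this out for each $H_i\in{\cal H}_{9,11-15}$, together with producing the explicit deletion colorings for all vertices, is where the care lies — especially for any $H_i$ of small diameter where color $3$ can still carry two vertices under $S_0$.
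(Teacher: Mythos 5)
Your overall architecture — explicit $4$-colorings chosen to be valid simultaneously for every $S\in{\cal S}_{1,3,\overline{4}}$ (singleton classes $3$ and $4$), reduction of the lower bound to the coordinatewise smallest sequence $(1,3,4)$, and explicit deletion colorings for criticality — matches the paper's, and your counting argument is precisely the paper's own proof for $P_8$ (for $C_8$ the paper instead invokes $\es(P_8)=4$ and subgraph monotonicity; your direct count $4+2+1=7<8$ is equally valid). The genuine gap is in your lower-bound scheme for the graphs $H_i$: the claimed dichotomy ``either the three class-size bounds sum to less than $n(G)$, or they sum to exactly $n(G)$, whence $\varphi^{-1}(1)$ is forced to be a maximum independent set'' fails for $H_{13}$. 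By the proof of Theorem~\ref{spca}, $H_{13}$ is the tree obtained from the path $abcdef$ by attaching pendent vertices $g$ to $b$ and $h$ to $e$. Here $\alpha(H_{13})=5$ (witnessed by $\{a,c,f,g,h\}$), while at most $2$ vertices are pairwise at distance at least $4$ and at most $2$ pairwise at distance at least $5$, so your bounds sum to $5+2+2=9>8=n(H_{13})$. A hypothetical $3$-coloring therefore need not place a maximum independent set on color $1$, and your enumeration does not close this case: you would have to enumerate all independent sets of size $4$ as well, or argue structurally as the paper does (each of $b$ and $e$ has two pendent neighbours, so neither can receive color $1$; since $d(b,e)=3$ this forces $\{\varphi(b),\varphi(e)\}=\{2,3\}$, and then $c$ and $d$, adjacent to each other, are both forced to color $1$).

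A second, self-defeating flaw is your optional route via Proposition~\ref{spc6}: reading strictness from $\diam(G)>3$ would give $\es(G)\le n(G)-\alpha(G)$, but $n(G)-\alpha(G)=3$ for $H_{13}$, $H_{14}$, and $H_{15}$ (the latter two have $7$ vertices and independence number $4$), so this route ``proves'' $\es\le 3$ for graphs the lemma asserts have $\es=4$. The ``only if'' direction of Proposition~\ref{spc6} cannot be applied this way — already $P_5$ with $s_2=3$ satisfies $\es(P_5)=3=n-\alpha+1$ although $\diam(P_5)=4>3$ — and indeed the paper itself only ever uses the inequality and the ``equality if $\diam(G)\le s_2$'' direction. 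Discard that shortcut and keep your explicit $4$-colorings, which do suffice for the upper bounds; but the lower bound for $H_{13}$ needs an argument beyond the counting-plus-maximum-independent-set plan you propose.
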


\begin{proof}
Let $S\in \c134$.

Suppose that $P_8=abcdefgh$ has an $S$-packing $3$-coloring $\varphi$.
Since $|\varphi^{-1}(1)| \le 4$, $|\varphi^{-1}(2)| \le 2$, and $|\varphi^{-1}(3)| \le 2$, we have $|\varphi^{-1}(1)|=4$, $|\varphi^{-1}(2)|=2$ and $|\varphi^{-1}(3)|=2$. Without loss of generality assume $\varphi(a)=\varphi(c)=\varphi(e)=\varphi(g)=1$. Then we have $c(b)=c(h)=3$ because $s_3\ge4$. Thus $d$ and $f$ must receive color $2$, a contradiction. The pattern $``1~2~1~3~1~2~1~4"$ gives an $S$-packing $4$-coloring of $P_8$, so $\es(P_8)=4$. Since $\es(P_8)=4$ and the pattern $``1~2~1~3~1~2~1~4"$ gives an $S$-packing 4-coloring of $C_8$, we have $\es(C_8)=4$. The first $k$ entries in the pattern $``1~2~1~3~1~2~1"$ gives an $S$-packing 3-coloring of $P_k$ with $k\le7$, so $P_8$ and $C_8$ are 4-$\es$-vertex-critical.

If $G= H_9$, then the pattern ``2 1 3 1 1 2 1 4" is an $S$-packing 4-coloring of $H_9$, so $\es(H_9) \le 4$. Suppose that $H_9$ has an $S$-packing 3-coloring $\varphi$. Observe that $\{\varphi(a),\varphi(c)\}=\{2,3\}$, which implies that $\varphi(e)=\varphi(f)=1$ or $\varphi(g)=\varphi(h)=1$, a contradiction. Hence $\es(H_9)=4$. Now we give an $S$-packing 3-coloring $\phi$ of $H_9-x$ for any $x\in V(H_9)$. We define $\phi$ as ``1 2 1 1 3 1 2", ``2 1 1 1 2 1 3", ``1~2~1~3~1~2~1", ``2 1 3 1 2 1 3", ``2 1 3 1 1 1 3", ``2 1 3 1 1 2 3", ``2 1 3 1 1 2 1" when $x=a,c,d,e,f,g,h$, respectively.

If $G=H_{11}$, then the pattern ``2 1 3 1 1 2 1 4"  is an $S$-packing 4-coloring, so $\es(H_{11})\le 4$. Suppose that $H_{11}$ admits an $S$-packing 3-coloring $\varphi$. Observe that $\{\varphi(a),\varphi(c)\}=\{2,3\}$.  Then $\varphi(g)=\varphi(h)=1$ or $\varphi(e)=\varphi(f)=1$, a contradiction. Hence $\es(H_{11})=4$. Now we give an $S$-packing 3-coloring $\phi$ of $H_{11}-x$ for any $x\in V(H_{11})$. We define $\phi$ as ``1 3 1 1 2 1 3", ``1~3~1~2~1~2~1", ``2 1 3 1 1 2 3", ``2 1 3 1 1 2 1" when $x=a,d,g,h$, respectively.

If $G=H_{12}$, then the pattern ``4 1 2 1 3 1 2 1"  is an $S$-packing 4-coloring of $H_{12}$. Hence $\es(H_{12})\le4$. Suppose $H_{12}$ admits an $S$-packing 3-coloring $\varphi$, then $\varphi(e)=2$ or $3$. If $\varphi(e)=2$, then we have $\{\varphi(f),\varphi(g)\}=\{1,3\}$ and  $\varphi(d)=1$. Thus $\varphi(c)\in\{2,3\}$, a contradiction. If $\varphi(e)=3$, then $\varphi(d)=1$, $\varphi(c)=2$, $\varphi(b)=1$. Thus $\varphi(a)\in\{2,3\}$, a contradiction. Therefore $\es(H_{12})=4$. Now we give an $S$-packing 3-coloring $\phi$ of $H_{12}-x$ for any $x\in V(H_{12})$. We define $\phi$ as ``1 2 1 3 1 2 1", ``3 2 1 3 1 2 1", ``3 1 1 3 1 2 1", ``3 1 2 3 1 2 1", ``3 1 2 1 1 2 1", ``2 1 3 1 2 2 1", ``2 1 3 1 2 1 1", ``1~2~1~3~1~2~1" when $x=a,b,c,d,e,f,g,h$, respectively.

If $G=H_{13}$, then the pattern ``1 2 1 3 1 2 1 4" is an $S$-packing 4-coloring. Hence $\es(H_{13}) \le 4$. Suppose that $H_{13}$ admits an $S$-packing 3-coloring $\varphi$, then  $\{\varphi(b),\varphi(e)\}=\{2,3\}$. Then $\varphi(c)=\varphi(d)=1$, a contradiction. Therefore $\es(H_{13})=4$. Now we give an $S$-packing 3-coloring $\phi$ of $H_{13}-x$ for any $x\in V(H_{13})$. We define $\phi$ as ``1 3 1 2 1 1 1",``1 3 1 2 1 1 1", ``2 1 3 1 2 1 1" when $x=b,c,g$, respectively.

If $G=H_{14}$, then the pattern ``2 1 3 1 2 1 4" is an $S$-packing 4-coloring of $H_{14}$. Hence $\es(H_{14})\le 4$. Suppose that  $H_{14}$ admits an $S$-packing 3-coloring $\varphi$, then $\{\varphi(c),\varphi(d)\}=\{2,3\}$. Thus $\varphi(a)=\varphi(c)>1$ or $\varphi(a)=\varphi(d)>1$, a contradiction. Therefore $\es(H_{14})=4$. Now we give an $S$-packing 3-coloring $\phi$ of $H_{14}-x$ for any $x\in V(H_{14})$. We define $\phi$ as ``1 2 3 1 1 1",``2 2 3 1 1 1", ``1 2 3 1 1 1", ``2 1 3 2 1 1",``1 2 1 3 1 1",``2 1 3 1 2 1", when $x=a,b,c,d,f,g$, respectively.

Finally, if $G=H_{15}$, then the pattern ``4 1 2 1 3 1 1" is an $S$-packing 4-coloring of $H_{15}$. Hence $\es(H_{15})\le4$. Suppose that  $H_{15}$ admits an $S$-packing 3-coloring $\varphi$, then $\{\varphi(c),\varphi(e)\}=\{2,3\}$ and $\varphi(b)=\varphi(d)=1$. Thus $\varphi(a)\in\{2,3\}$, a contradiction.  Now we give an $S$-packing 3-coloring $\phi$ of $H_{15}-x$ for any $x\in V(H_{15})$. We define $\phi$ as ``1 2 1 3 1 1", ``1 1 2 3 1 1 ", ``2 1 1 3 1 1", ``1 3 1 2 1 1", ``2 1 3 1 1 2", when $x=a,b,c,e,f$, respectively.\end{proof}

\begin{lem}\label{n34}
If $S\in \b14 \cup \c134$, $G$ is a $4$-$\es$-vertex-critical graph with at least one cycle, and $C$ is a longest cycle of $G$, then the following hold.
\begin{itemize}
\item[(a)]If $n(C) = 3$, then $G\in {\cal H}_{2-4}$.
\item[(b)]If $n(C) = 4$ and $C$ contains a chord, then $G\in \{K_4, H_1\}$.
\item[(c)]If $n(C) \in \{5, 6\}$, then $G\in \{C_{n(C)}\}\cup \mathcal{C}_{n(C)}$.
\end{itemize}
\end{lem}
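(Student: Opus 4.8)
The plan is to locate inside $G$ a small subgraph whose $\es$-value is already $4$ and then use vertex-criticality to pin down $G$ completely. The engine, used repeatedly, is the following: if $H\subseteq G$ with $\es(H)=\es(G)=4$, then $H$ must be spanning, since otherwise $H\subseteq G-u$ for some omitted vertex $u$ would give $\es(G-u)\ge 4$ by Lemma~\ref{spc4}, contradicting criticality. Throughout, $G$ is connected by Lemma~\ref{spc5} and, since $C$ is a longest cycle, $G$ contains no cycle longer than $C$.

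For part (c) this engine does almost all the work. For $n(C)\in\{5,6\}$, Proposition~\ref{spc6} (using $\diam(C_5)=2\le s_2$ and $\diam(C_6)=3\le s_2$) gives $\es(C_{n(C)})=4$, so the engine makes $C$ spanning and $n(G)=n(C)$. Hence $G$ is $C_{n(C)}$ with added chords; it is Hamiltonian, and it has diameter at most $s_2$. Proposition~\ref{spc6} then yields $\es(G)=n(G)-\alpha(G)+1$ and $\es(G-u)=n(G)-\alpha(G-u)$ for each $u$, so $G$ is $4$-$\es$-vertex-critical iff $\alpha(G)=n(G)-3$ and $\alpha(G-u)=n(G)-3$ for every $u$. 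This reduces part (c) to a finite inspection of the chord patterns on $C_5$ and $C_6$: the admissible ones are exactly $\{C_{n(C)}\}\cup\mathcal{C}_{n(C)}$ (the forward direction being Lemma~\ref{KCH}), whereas a denser pattern either drops $\alpha(G)$ below $n(G)-3$, forcing $\es(G)>4$, or admits a vertex $u$ with $\alpha(G-u)=n(G)-4$, so that $\es(G-u)=4$ and criticality fails.

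For part (b), if $C$ carries both chords then $K_4\subseteq G$; since $\es(K_4)=4$, the engine forces $K_4$ to be spanning and $G=K_4$. If $C$ has a single chord, then $G$ contains the diamond $D=K_4-e$, but $\es(D)=3$, so $D$ is not spanning and $G$ has further vertices. One checks that attaching a pendant to a degree-two vertex of $D$ produces a graph with $\es=4$, namely $H_1$; once $H_1$ occurs it is spanning by the engine, and any further edge would either create a cycle of length $\ge 5$ or a (non-spanning) $K_4$, both impossible, so $G=H_1$. It then remains to show the attachment must be exactly this pendant: a pendant at a degree-three vertex keeps $\es=3$, while a longer attachment or a second edge into $D$ either creates a cycle of length $\ge 5$ or embeds $H_1$ non-spanningly.

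Finally, $n(C)=3$ forces every block of $G$ to be an edge or a triangle. Reading the figure, $H_2,H_3,H_4$ are the two triangles sharing a vertex (bowtie), the triangle with a pendant path of length two, and the triangle with one pendant at each vertex (net), each of $\es$-value $4$. The plan is to prove that any triangle cactus with $\es=4$ contains one of these three configurations; the engine then makes it spanning, and since any extra edge in a triangle cactus creates a $4$-cycle, $G$ equals it, giving $G\in\{H_2,H_3,H_4\}$. The main obstacle, shared by parts (a) and (b), is precisely this containment step, equivalently an a priori bound on $n(G)$: unlike part (c), the longest cycle here has $\es$-value only $3$ and is not spanning, so it yields no size bound directly. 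One must instead argue that a triangle cactus (resp.\ an augmented diamond) avoiding all target configurations admits an $S$-packing $3$-coloring, exploiting that on any subgraph of diameter at most $s_2$ the colours $2$ and $3$ can each be used at most once, because $s_2\ge 3$ and $s_3\ge 4$. I expect this step to carry the real weight, the remaining verifications being finite and supported by Lemma~\ref{KCH}.
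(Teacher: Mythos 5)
Your skeleton --- a critical subgraph must be spanning by Lemma~\ref{spc4}, after which Lemma~\ref{KCH} pins down the few admissible extensions --- is exactly the paper's engine, and your part (c) is sound: where the paper handles $n(C)=6$ by observing that a distance-two chord yields $C_5\subseteq G-u$, your route through Proposition~\ref{spc6} and independence numbers is a legitimate alternative. (One caveat there: $\es(G-u)=n(G)-\alpha(G-u)$ is an equality only when $\diam(G-u)\le s_2$; in general the proposition gives an upper bound, which is all you need for the deletions, and the equality you invoke to refute dense chord patterns does hold because $\alpha(G-u)\le 2$ on at most five vertices forces diameter at most $3$ --- but this needs saying.) The genuine gap is in (a) and (b), and you flag it yourself: the claim that a triangle cactus, respectively an augmented diamond, avoiding all target configurations admits an $S$-packing $3$-coloring is deferred, and it is precisely the load-bearing step. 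The paper never proves such a general statement; it makes two pinpointed moves instead. In (b), with chord $bd$: first, every $x\in(N_G(b)\cup N_G(d))\setminus V(C)$ has no neighbour outside $V(C)$, since otherwise $H_4\subseteq G-a$; second, if $d_G(a)=d_G(c)=2$, the map with $\varphi(b)=2$, $\varphi(d)=3$ and all remaining vertices coloured $1$ is a valid $S$-packing $3$-coloring (the rest is an independent set, and colours $2,3$ are each used once), a contradiction that forces a pendant at $a$ or $c$ and hence $H_1$.

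Beyond being deferred, your case split in (b) is wrong as stated: ``a longer attachment or a second edge into $D$ either creates a cycle of length $\ge 5$ or embeds $H_1$ non-spanningly'' fails in at least two subcases. If $x\notin V(C)$ is adjacent to both $b$ and $d$ (a second page of the book on $bd$), there is no $C_k$ with $k\ge5$ and no copy of $H_1$ at all (the resulting graph has minimum degree $2$, while $H_1$ has a leaf); this configuration is excluded only by the hub colouring above, i.e.\ it can live only inside a $3$-colourable graph, not by a forbidden-subgraph argument. If a path $bxy$ of length two hangs from the degree-three vertex $b$, again neither of your obstructions appears; what appears is the triangle $bcd$ together with the tail $bxy$, a copy of $H_4$ avoiding $a$, so $H_4\subseteq G-a$ and $\es(H_4)=4$ contradicts criticality --- $H_4$ is the obstruction missing from your list. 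Part (a) needs the same two tools (the paper splits into one triangle versus several, uses the colouring $\varphi(b)=2$, $\varphi(c)=3$, rest $1$ when $N^2_G(b)\setminus N_G(c)=N^2_G(c)\setminus N_G(b)=\emptyset$, and otherwise lands on $H_4$, $H_3$ or $H_2$); your identification of the target graphs is right up to swapping the names of $H_3$ and $H_4$, which is immaterial. Until these colouring and $H_4$-obstruction arguments are supplied, parts (a) and (b) are not proved.
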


\begin{proof}
Let $S\in \b14 \cup \c134$. Note that the graphs from Lemma~\ref{KCH} are 4-$\es$-vertex-critical. Let now $G$ be a $4$-$\es$-vertex-critical graph with a longest cycle $C$.

(a) Suppose $n(C)=3$. Let $V(C)=\{a,b,c\}$. We first assume that $G$ contains only one triangle. If $H_3$ or $H_4$ is a subgraph of $G$, then we actually have $G = H_3$ or $G = H_4$, for otherwise we find another triangle in $G$ or a cycle longer than $3$. If $d_{G}(v)\geq 3$ holds for each vertex of $C$, then $G = H_3$ since $H_3$ is $4$-$\es$-vertex-critical.  If $d_{G}(v)=2$ for some $v\in\{a,b,c\}$, then assume without loss of generality that $d_{G}(a)=2$. If $N^2_{G}(b)\setminus N_G(c)=\emptyset$ and $N^2_{G}(c)\setminus N_G(b)=\emptyset$, then $V(G)\setminus\{b,c\}$ is an independent set in $G$, and so a coloring $\varphi$ with $\varphi(b)=2$, $\varphi(c)=3$ and other vertices with color $1$ is an $S$-packing 3-coloring of $G$, a contradiction. So $N^2_{G}(b)\setminus N_G(c)\neq \emptyset$ or $N^2_{G}(c)\setminus N_G(b)\neq\emptyset$. Since $H_4$ is 4-$\es$-vertex-critical,   $G= H_4$.

Suppose secondly that there are at least two triangles in $G$. Since $H_4$ is $\es$-vertex-critical, the triangles in $G$ have exactly one common vertex, for otherwise there is vertex $v$ in $G$ such that $H_4\subseteq G-v$ or $n(C)\ge 4$. This implies that $H_2$ is a spanning subgraph of $G$. Since $n(C)=3$, we conclude that $G= H_2$.

(b) Suppose $n(C)=4$. Let $C = abcda$. If $ac\in E(G)$ and $bd\in E(G)$, then
$G= K_4$ by Lemma~\ref{KCH}. Suppose $bd\in E(G)$. If there is a vertex $x\in N_G(b)\setminus V(C)$ such that $N_G(x)\setminus V(C)\neq\emptyset$, then $H_4\subseteq G-a$, a contradiction. Therefore, for any vertex $x\in (N_G(b)\cup N_G(d))\setminus V(C)$ we have $N_G(x)\setminus V(C)=\emptyset$. If $d_G(a)=d_G(c)=2$, then $V(G)\setminus\{b,d\}$ is an independent set in $G$, and so a mapping $\varphi$ with $\varphi(b)=2$, $\varphi(d)=3$ and $\varphi(N_G(b)\cup N_G(d)\setminus\{b,d\})=1$ is an $S$-packing 3-coloring of $G$, a contradiction. Thus $d_G(a)\ge2$ or $d_G(c)\ge 2$.  It implies that $H_1$ is a subgraph of $G$. Since $n(C)=4$ and by Lemma~\ref{KCH} $H_1$ is 4-$\chi_S$-vertex-critical, we have $G= H_1$.

(c) Suppose finally that $n(C)\in\{5,6\}$. Since $C$ is 4-$\es$-vertex-critical, $C$ is a spanning subgraph of $G$. If $n(C)=5$, then since $K_4$ and all the four graphs from $\mathcal{C}_5$ are 4-$\es$-vertex-critical,  $\mathcal{C}_5$ is the family of $4$-$\es$-vertex-critical graphs that contain $C_5$ as a proper spanning subgraph. If $n(C)=6$, then since $C_5$ is 4-$\es$-vertex-critical, any two vertex at distance $2$ are not adjacent in $C_6$. Hence $\mathcal{C}_6$ is the family of $4$-$\es$-vertex-critical graphs that contain $C_6$ as a proper spanning subgraph by Lemma~\ref{KCH}.  Therefore if $n(C)\in \{5,6\}$ and $G$ is $4$-$\es$-vertex-critical, then $G\in \{C_{n(C)}\}\cup \mathcal{C}_{n(C)}$.
\end{proof}

We can now state out first characterization.

\begin{thm}\label{spcthm1}
Let $S\in \b14$. Then a graph $G$ is $4$-$\es$-vertex-critical if and only if
$$G\in \{K_4, C_5, C_6, P_6\}\cup \mathcal{C}_5\cup \mathcal{C}_6 \cup {\cal H}_{1-8}\,.$$
\end{thm}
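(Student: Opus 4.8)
The plan is to prove both directions of the biconditional, with the reverse direction being almost immediate and the forward direction carrying the real content. For the reverse implication, I would simply observe that every graph in the list $\{K_4, C_5, C_6, P_6\}\cup \mathcal{C}_5\cup \mathcal{C}_6 \cup {\cal H}_{1-8}$ has already been shown to be $4$-$\es$-vertex-critical for $S\in\b14$: the graphs $K_4, C_5, C_6$ together with $\mathcal{C}_5, \mathcal{C}_6$ and ${\cal H}_{1-5,7}$ come from Lemma~\ref{KCH}, while $P_6$, $H_6$, and $H_8$ come from Lemma~\ref{HH68}. So the reverse direction is a one-sentence assembly of prior lemmas.

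For the forward direction, I would let $G$ be an arbitrary $4$-$\es$-vertex-critical graph for $S\in\b14$ and split into cases according to whether $G$ contains a cycle. By Lemma~\ref{spc5}, $G$ is connected. If $G$ contains a cycle, let $C$ be a longest cycle. The key structural input is Lemma~\ref{n34}, which already handles the cases $n(C)=3$, $n(C)=4$ with a chord, and $n(C)\in\{5,6\}$, placing $G$ into the appropriate sublist. So the work that remains is to rule out or classify the cases not covered by Lemma~\ref{n34}: namely (i) $n(C)=4$ with no chord, and (ii) $n(C)\ge 7$. I expect that for $n(C)=4$ chordless one shows $C_4$ is \emph{not} $4$-$\es$-vertex-critical for $S\in\b14$ (since $\es(C_4)=3$ as $\diam(C_4)=2\le s_2$ forces few colors), and that attaching pendant structure either creates a longer cycle, a chord, or an $H$-graph already in the list, or else admits a $3$-coloring. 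For $n(C)\ge 7$, the idea is that a cycle of length $\ge 7$ already has $\es(C_{n(C)})\le 4$ but contains as a subgraph a shorter $4$-$\es$-vertex-critical graph (e.g.\ $P_6$ or a structure forcing reduction), contradicting vertex-criticality via Lemma~\ref{spc4}; more carefully, one argues no longest cycle of length $\ge 7$ can occur because the graph would properly contain a critical subgraph and thus could not itself be critical.

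The acyclic case is where I would handle forests: if $G$ is a tree (being connected and acyclic), then I would argue that the only tree in the list is $P_6$, and that $P_6$ is the unique $4$-$\es$-vertex-critical tree for $S\in\b14$. The mechanism here is that for a tree $T$ with $\diam(T)\le s_2$, Proposition~\ref{spc6} pins down $\es(T)$ exactly, and for $\es(T)=4$ one needs $n(T)-\alpha(T)+1=4$, i.e.\ $n(T)-\alpha(T)=3$; combined with criticality (every vertex deletion must drop the value) this should force $T=P_6$. Trees of larger diameter require separately checking that they contain a proper $4$-$\es$-vertex-critical subgraph (again via Lemma~\ref{spc4} and the established criticality of $P_6$), violating criticality.

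The main obstacle I anticipate is the long-cycle and tree-exclusion arguments, i.e.\ showing that nothing with $n(C)\ge 7$ (or a large-diameter tree) survives as critical. Lemma~\ref{n34} conveniently disposes of all the \emph{small} cases, so the theorem's proof is really about closing the gap \emph{above} those cases. The cleanest route is a monotonicity/containment argument: every such large graph properly contains one of the already-identified $4$-$\es$-vertex-critical graphs (or a subgraph forcing $\es\ge 4$ after a single vertex deletion), which by Lemma~\ref{spc4} contradicts the defining property $\es(G-u)<\es(G)$ for all $u$. Verifying that this containment always occurs—and that no genuinely new critical graph appears for $n(C)\ge 7$—is the delicate bookkeeping step, but it is structural rather than computational, so I would not expect heavy case-by-case coloring beyond what Lemmas~\ref{KCH}, \ref{HH68}, and \ref{n34} already supply.
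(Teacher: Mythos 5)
Your high-level architecture --- reverse direction by assembling Lemmas~\ref{KCH} and~\ref{HH68}, forward direction via a longest cycle, Lemma~\ref{n34} for the small cyclic cases, and exclusion of $n(C)\ge 7$ because every vertex-deleted subgraph still contains the critical $P_6$ (via Lemma~\ref{spc4}) --- matches the paper. But your acyclic case contains a genuine error: you assert that $P_6$ is ``the only tree in the list'' and that criticality combined with $n(T)-\alpha(T)=3$ should force $T=P_6$. This is false, and it contradicts both the theorem you are proving and your own reverse direction: $H_8$, the tree obtained from $P_5=abcde$ by attaching a pendant vertex to the center $c$, is acyclic, belongs to ${\cal H}_{1-8}$, and is $4$-$\es$-vertex-critical for $S\in{\cal S}_{1,\overline{4}}$ by Lemma~\ref{HH68}. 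Worse, $H_8$ satisfies exactly your numerical criterion: $n(H_8)=6$, $\alpha(H_8)=3$, $\diam(H_8)=4\le s_2$, which is precisely how the paper computes $\es(H_8)=4$ from Proposition~\ref{spc6}. So your mechanism cannot single out $P_6$, and your forward direction would ``prove'' a statement inconsistent with the list you verified in the reverse direction. The paper instead stratifies trees by the order $n(P)$ of a longest path: $n(P)\le 6$ by criticality of $P_6$; if $n(P)=6$ then $G=P_6$; if $n(P)=5$ with $P=abcde$, then $d_G(c)=2$ would give the $S$-packing $3$-coloring $\varphi(b)=2$, $\varphi(d)=3$, $\varphi(N_G(b)\cup N_G(d))=1$, so $d_G(c)\ge 3$ and criticality of $H_8$ forces $G=H_8$; if $n(P)\le 4$ then $\es(G)\le 3$.

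A secondary shortfall: the chordless $n(C)=4$ case, which you dispatch in one sentence, is where most of the paper's forward-direction work lies, and it is where $H_5$, $H_6$, and $H_7$ --- none of which your sketch names --- actually arise. The paper takes $C=abcda$, picks $w\in V(C)$ with a neighbor $w_1$ outside $C$, and splits on whether $N_G(w_1)\setminus V(C)\neq\emptyset$: in the first subcase $H_6$ must be a spanning subgraph and checking the only possible extra edges $ae$, $cf$ yields $G\in\{H_6,H_7\}$; in the second, an explicit $3$-coloring argument forces pendant neighbors on both pairs of opposite cycle vertices, making $H_5$ spanning and yielding $G\in{\cal H}_{5,7}$. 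Saying that attached structure ``either creates a longer cycle, a chord, or an $H$-graph already in the list, or else admits a $3$-coloring'' is the right instinct but is not an argument: the content of the theorem is exactly the verification that the surviving graphs are these three and no others, so even after repairing the tree case this classification step remains open in your proposal.
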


\begin{proof}
Let $S\in \b14$ and let $G$ be 4-$\es$-vertex-critical. First suppose that $G$ contains a cycle, and let $C$ be a longest cycle of $G$. Since $P_6$ is $4$-$\es$-vertex-critical, Lemma~\ref{HH68} implies $n(C) \le 6$. By Lemma~\ref{n34} and the fact that $\es(C_4)=3$, it remains to consider the case in which $n(C)=4$, $n(G) \ge 5$, and there is no chord in $C$. Let $C = abcda$. Since $\chi_S(C)\le 3$, there is a vertex $w\in V(C)$ such that $N_G(w)\setminus V(C)\neq \emptyset$.  Let $w_1\in N_G(w)\setminus V(C)$.

First suppose $N_G(w_1)\setminus V(C)\neq \emptyset$. We may assume that $w=c$ and $w_1=e$. Let $f\in N_G(e)\setminus V(C)$. Then $H_6$ is subgraph of $G$. By Lemma~\ref{HH68}, $H_6$ is a spanning subgraph of $G$.
%Suppose that there is a vertex $w\in V(C)$ such that $N^2_{G}(w)\setminus V(C) \neq \emptyset$. We may assume that $w=c$.
%It is equivalent to consider that $H_6$ is a spanning subgraph of $G$ by Lemma~\ref{HH68}.
Since $G$ is $C_k$-free for $k\ge 5$, at most one of the edges $\{ae,cf\}$ can be possibly contained in $G$. If $ae\notin E(G)$ and $cf\notin E(G)$, then $G= H_6$ by Lemma~\ref{HH68}. If $ae\in E(G)$, then $G= H_7$ by Lemma~\ref{KCH}. If $cf\in E(G)$, then $H_4\subseteq G-b$, a contradiction.

%Suppose next that $N^2_{G}(w)\setminus V(C)=\emptyset$ for each $w\in V(C)$.
Thus we may assume that $N_G(w_1)\setminus V(C)=\emptyset$ for each $w_1\in N_G(w)\setminus V(C)$. It implies that $N_{G}(u)\setminus V(C)$ is an independent set for any $u\in V(C)$. %, for otherwise there is a vertex $w'\in V(C)$ such that $H_4\subseteq G-w'$, a contradiction.
If $N_G(b)\cup N_G(d)\setminus V(C)=\emptyset$, then $V(G)\setminus \{a,c\}=N(a)\cup N(c)$ is an independent set in $G$, and so a mapping  $\varphi$ with $\varphi(a)=2$, $\varphi(c)=3$ and $\varphi(N(a)\cup N(c))=1$ is an $S$-packing 3-coloring of $G$, a contradiction. Thus $N_G(b)\cup N_G(d)\setminus V(C)\ne\emptyset$ and $N_G(a)\cup N_G(c)\setminus V(C)\ne\emptyset$, and so $H_5$ is a spanning subgraph of $G$ by Lemma~\ref{KCH}. If some edge from $\{af,de\}$ or from $\{ef,cf,be\}$ is contained in $G$, then $H_4\subseteq G-y$ for some $y\in V(G)$ or $C_k\subseteq G$ with $k\ge5$, a contradiction. Therefore, only one of $df$ and $ae$ can be contained in $G$, and so $G\in {\cal H}_{5,7}$ by Lemma~\ref{KCH}.

Suppose now that $G$ is acyclic. If $P$ is a longest path in $G$, then $n(P) \le 6$ by Lemma~\ref{HH68}. If $n(P) = 6$, then $G = P_6$. If $n(P) = 5$, then let $P_5=abcde$. If $d_G(c)=2$, then the mapping $\varphi$ with $\varphi(b)=2$, $\varphi(d)=3$ and $\varphi(N_G(b)\cup N_G(d))=1$ is an $S$-packing 3-coloring of $G$ which implies that $\es(G) \le 3$, a contradiction. Therefore $d_G(c)\ge3$. But then $G = H_8$ by Lemma~\ref{HH68}. If $n(P) \le 4$, then we have that $\es(G) \le 3$, so we get no new graph.
\end{proof}

\begin{thm}\label{spcthm2}
Let $S\in \c134$ and let $G$ be a graph with a cycle. Then $G$ is $4$-$\es$-vertex-critical if and only if
$$G\in \{K_4, C_5, C_6, C_8\} \cup \mathcal{C}_5\cup \mathcal{C}_6\cup  {\cal H}_{1-5,7,9,11,15}\,.$$
\end{thm}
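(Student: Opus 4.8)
The plan is to mirror the proof of Theorem~\ref{spcthm1}, swapping the $\b14$-specific extremal facts for their $\c134$ analogues; note that the hypothesis restricts $G$ to graphs containing a cycle, so the acyclic members produced by Lemma~\ref{spc2} (namely $P_8$ and $H_{12},H_{13},H_{14}$) are out of scope. For sufficiency, every graph in $\{K_4,C_5,C_6\}\cup\mathcal{C}_5\cup\mathcal{C}_6\cup{\cal H}_{1-5,7}$ is $4$-$\es$-vertex-critical by Lemma~\ref{KCH}, while $C_8$ together with ${\cal H}_{9,11,15}$ is $4$-$\es$-vertex-critical by Lemma~\ref{spc2}; all of these contain a cycle. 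For necessity I would fix a $4$-$\es$-vertex-critical graph $G$ with a cycle and let $C$ be a longest cycle. The recurring tool is that $G$ cannot properly contain any $4$-$\es$-vertex-critical graph $H$: if $V(H)\subsetneq V(G)$, pick $u\in V(G)\setminus V(H)$, so $H\subseteq G-u$ and $\es(G-u)\ge\es(H)=4$ by Lemma~\ref{spc4}, contradicting $\es(G-u)=3$.

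First I would pin down $n(C)$. A diameter count gives $\es(C_7)=5$ for $S\in\c134$: since $\diam(C_7)=3$ while $s_2=3$ and $s_3\ge4$, color $1$ is confined to an independent set of at most three vertices and each color $i\ge2$ can be used at most once, so covering seven vertices needs five colors. Hence $C_7\not\subseteq G$ by Lemma~\ref{spc4}, ruling out $n(C)=7$. Because $P_8$ is $4$-$\es$-vertex-critical, a cycle with $n(C)\ge9$ would contain $P_8$ while omitting some vertex of $G$, which is impossible; thus $n(C)\le8$ and $n(C)\in\{3,4,5,6,8\}$.

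Lemma~\ref{n34}, which is stated for all $S\in\b14\cup\c134$, clears most cases at once: $n(C)=3$ gives $G\in{\cal H}_{2-4}$, $n(C)=4$ with a chord gives $G\in\{K_4,H_1\}$, and $n(C)\in\{5,6\}$ gives $G\in\{C_{n(C)}\}\cup\mathcal{C}_{n(C)}$, all inside the asserted list. Two cases then remain. If $n(C)=8$, then because $C_8$ is $4$-$\es$-vertex-critical it must be a spanning subgraph, forcing $n(G)=8$ and $G=C_8$ with some chords added; I would enumerate the admissible chord sets, using that chords only shorten cycles (so $8$ stays maximal) and that both $\es(G)=4$ and $\es(G-u)=3$ must persist, to extract the order-$8$ survivors $G\in\{C_8,H_9,H_{11}\}$. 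If $n(C)=4$ with $C$ chordless and $n(G)\ge5$, I would redo the attachment analysis of Theorem~\ref{spcthm1}, tracking how the relaxation $s_2=3$ alters colorability: a graph of diameter exceeding $3$, such as $H_6$, now has $\diam>s_2$, so by Proposition~\ref{spc6} its $\es$-value drops to $3$ and it is no longer critical, whereas the extra slack $\es(P_7)=3$ lets pendant paths grow longer and produces the new critical graph $H_{15}$.

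The crux is this final pair of cases. In the chordless-$C_4$ analysis one must revisit every way of attaching trees and extra edges to the $4$-cycle and decide, for $s_2=3$, which configurations remain $\es$-value $4$ and vertex-critical and which collapse to $\es$-value $3$; this bookkeeping is subtler than in the $\b14$ setting because the weaker second color simultaneously discards old graphs (for example $H_6$) and admits longer new ones. Symmetrically, the $n(C)=8$ case requires verifying that among all chorded $8$-cycles only the prescribed graphs stay critical. I expect these two enumerations, rather than the cycle-length bound, to carry the technical weight of the proof.
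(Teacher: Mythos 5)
Your sufficiency direction and the bound $n(C)\le 8$ (including the count giving $\es(C_7)=5$) are fine and match the paper, as does your appeal to Lemma~\ref{n34} for $n(C)=3$, for the chorded $n(C)=4$ case, and for $n(C)\in\{5,6\}$. The genuine gap is your allocation of $H_9$ and $H_{11}$ to the $n(C)=8$ case: these graphs do not contain $C_8$ at all. Both are unicyclic with longest cycle $C_4$ --- $H_9$ is the $4$-cycle $abcd$ with a pendant path on four vertices attached at $c$, and $H_{11}$ is the $4$-cycle with pendant paths on two vertices attached at the antipodal vertices $a$ and $c$ --- so they can never emerge from an enumeration of chorded $8$-cycles. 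Indeed that enumeration is empty: every chord of $C_8$ creates a $C_5$, $C_6$, or $C_7$ as a proper subgraph, and since $\es(C_5)=\es(C_6)=4$ and $\es(C_7)=5$, criticality (or $\es(G)=4$) fails immediately; this is exactly the paper's one-line argument, via Proposition~\ref{spc6}, showing $G=C_8$ when $n(C)=8$, with no chord bookkeeping at all.

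Consequently the entire burden of producing $H_9$, $H_{11}$, and $H_{15}$ falls on your chordless-$C_4$ case, yet your sketch there anticipates only $H_{15}$, so as planned the necessity direction would terminate with $H_9$ and $H_{11}$ unaccounted for. In the paper this case is where the real work happens: after splitting on whether some edge of $C=abcda$ has both endpoints of degree at least $3$ (which leads to $H_5$ spanning and $G\in{\cal H}_{5,7}$), it assumes $d_G(b)=d_G(d)=2$, takes longest paths $P$ from $c$ and $P'$ from $a$ avoiding the other cycle vertices, bounds $n(P)+n(P')\le 6$ by $P_8$-criticality, proves a separate claim that $n(P)\le 4$ and $n(P')\le 2$ force $G=H_{15}$ (or yield an explicit $S$-packing $3$-coloring), and then extracts $H_9$ from $(n(P),n(P'))=(5,1)$ and $H_{11}$ from $(3,3)$ by ruling out each addable edge (for instance, adding $eh$ to $H_9$ gives $H_{10}$, which dies because $H_{15}\subseteq H_{10}-g$). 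You would need this finer path-based case split, or an equivalent, to recover the two missing graphs; ``redo the attachment analysis of Theorem~\ref{spcthm1}'' does not by itself supply it, since in the $\b14$ setting pendant paths of these lengths never survive and no analogue of the $(5,1)$/$(3,3)$ analysis appears there.
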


\begin{proof}
Let $S\in \c134$ and let $C$ be a longest cycle of $G$. Since $P_8$ is $4$-$\es$-vertex-critical, $n\le 8$. If $n(C)=8$, then  $C$ is a spanning subgraph of $G$ by Lemma~\ref{spc2}. Since $\es(C_5)=\es(C_6)=4$, and $\es(C_7)=7-\alpha(C_7)+1>4$ by Proposition~\ref{spc6}, there is no chord in $C$. Therefore $G = C$ when $n(C)=8$. Since  $\es(C_7)=5$, we have $n(C)\neq 7$. By Lemma~\ref{n34} and the fact $\es(C_4)=3$ it remains to consider the case that $n(C) = 4$, $n(G)\ge5$, and there is no chord in $C$.

Let $C= abcda$. First suppose that there is an edge in $E(C)$, say $bc$, such that $d_G(b)\ge 3$ and $d_G(c)\ge 3$. Then $N_G(b)\cap N_G(c)=\emptyset$ for otherwise $G$ has a cycle of length at least 5. It follows that %For an edge $st\in E(C)$, if  $d_G(s)\ge3$ and $d_G(t)\ge3$, then
$H_5$ is a spanning subgraph of $G$ by Lemma \ref{KCH} because there is no chord in $C$. If $af$ or $de$ is contained in $G$, then $H_4\subseteq G-y$ for some $y\in V(G)$. Hence at most one of $df$ and $ae$ can be added to $G$. Therefore $G\in {\cal H}_{5,7}$ by Lemma \ref{KCH}.

Now consider the case in which $d_G(s)=2$ or $d_G(t)=2$ for each edge $st\in E(C)$. Without loss of generality, suppose that $d_G(b)=2$ and $d_G(d)=2$.
Let $P$ be a longest path with endpoint $c$, such that $a,b,d\notin V(P)$, and let $P'$ be a longest path with endpoint $a$, such that $c,b,d\notin V(P')$. Without loss of generality assume that $n(P) \ge n(P')$. If $n(P)\ge 3$ and $V(P)\cap V(P')\neq \emptyset$, then $G = H_7$. Indeed, for otherwise by the definition of $P$ and $P'$ we have $a,c\not \in V(P)\cap V(P')$, and then for some $k\ge 5$ we have $C_k\subseteq G-b$, a contradiction.  In the rest of the proof we may thus assume that if $n(P) \ge 3$, then $V(P)\cap V(P')= \emptyset$. Since $P_8$ is $4$-$\es$-vertex-critical, $n(P) + n(P') \le 6$.

\medskip\noindent
{\bf Claim.} If $n(P) \le 4$ and $n(P') \le 2$, then $G = H_{15}$.

\medskip\noindent
{\bf Proof.} Since  $n(P') \le 2$, we infer that if $x\in N_G(a)\setminus N_G(c)$ and $y\in N_G(a)\cap N_G(c)$, then $d_G(x)=1$ and $d_G(y)=2$. Hence  $N_G(a)$ is an independent set in $G$. If there is a vertex $x\in N_G(c)\setminus N_G(a)$ such that $d_G(x)\ge3$, then $H_{15}\subseteq G$. Since $H_{15}$ is $4$-$\es$-vertex-critical by Lemma~\ref{spc2}, $H_{15}$ is a spanning subgraph of $G$. Since $n(P') \le 2$ and $d_G(b)=d_G(d)=2$, only edges from $\{fg,cf\}$ are possibly contained in $G$.
If an edge from $fg$ or $cf$ is contained in $G$, then there is a vertex $v\in G$ such that $H_4\subseteq G-v$, a contradiction. Hence $G= H_{15}$. It remains to consider the case in which $d_G(x) \le 2$ holds for each $x\in N_G(c)$. Then $N_G(c)$ is an independent set in $G$, for otherwise $H_4\subseteq G-b$, a contradiction.
Since $n(P) \le 4$ and $d_G(x) \le 2$ for every $x\in N_G(c)$, the second neighborhood $N_G^2(c)$ is an independent set and $d_G(y)=1$ for each $y\in N_G^3(c)$. (It is possible that $N_G^3(c) = \emptyset$.) Then a mapping $\varphi$ with $\varphi(c)=3$, $\varphi(N_G(c)\cup N^3_G(c))=1$, and $\varphi(N^2_G(c))=2$ is an $S$-packing 3-coloring of $G$. This contradiction proves the claim. \qed

\medskip
It remains to consider the following two cases: (i) $n(P) = 5$, $n(P') = 1$, and (ii) $n(P) = n(P') = 3$. If $n(P) = 5$, then
$H_9$ is a spanning subgraph of $G$. (The vertices of $H_9$ are denoted as in Fig.~\ref{spcfig2}.) If some edge from $\{cf,eg,fh\}$ or $ch$ or $cg$ is contained in $G$, then $H_4\subseteq G-b$ or $C_5\subseteq G-b$ or $H_5\subseteq G-b$, respectively, a contradiction. Now we only need to check that whether $eh$ can be added to $H_9$. The graph obtained from $H_9$ by adding the edge $eh$ is $H_{10}$, cf.~Fig.~\ref{spcfig2} again. Then $H_{15}\subseteq H_{10}-g$, a contradiction. Hence  $G= H_9$. If $n(P) = 3$ and $n(P') = 3$, then $H_{11}\subseteq G$. By symmetry, if some edge from $\{af,ge,gf\}$ or $ah$ or $ae$ is contained in $G$, then $C_k\subseteq G-b$ with $k\ge5$ or $H_4\subseteq G-b$ or $H_5\subseteq G-b$, respectively, a contradiction. Since $H_{11}$ is 4-$\es$-vertex-critical, no additional edge can be added to $H_{11}$. We conclude that $G= H_{11}$.
\end{proof}

It remains to consider acyclic graphs for $S\in \c134$.

\begin{thm}\label{spca}
Let $S\in \c134$ and let $G$ be an acyclic graph. Then $G$ is $4$-$\es$-vertex-critical if and only if $G\in \{P_8\} \cup {\cal H}_{12-14}$.
\end{thm}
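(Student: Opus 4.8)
The statement has two directions, and essentially all the content lies in the reverse one.

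For the ``if'' direction, note that $P_8$, $H_{12}$, $H_{13}$, $H_{14}$ are all acyclic, and by Lemma~\ref{spc2} each of them is $4$-$\es$-vertex-critical for every $S\in\c134$. Hence every graph in $\{P_8\}\cup {\cal H}_{12-14}$ is an acyclic $4$-$\es$-vertex-critical graph, and nothing beyond citing Lemma~\ref{spc2} is needed here.

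For the ``only if'' direction, let $G$ be an acyclic $4$-$\es$-vertex-critical graph. By Lemma~\ref{spc5}, $G$ is connected, hence a tree. Let $P=v_1\cdots v_m$ be a longest path of $G$, so that $v_1,v_m$ are leaves and $\diam(G)=m-1$. First I would pin down $m$. For the upper bound I use that $P_8$ is $4$-$\es$-vertex-critical (Lemma~\ref{spc2}): if $m\ge 9$, then $G-v_1$ still contains a $P_8$, so $\es(G-v_1)\ge\es(P_8)=4$ by Lemma~\ref{spc4}, contradicting criticality; thus $m\le 8$. For the lower bound, if $m\le 4$ then $\diam(G)\le 3$, so $G$ is a star or a double star and Proposition~\ref{spc6} gives $\es(G)=n(G)-\alpha(G)+1\le 3$ (as $n(G)-\alpha(G)\le 2$ for such trees), contradicting $\es(G)=4$. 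Hence $5\le m\le 8$, and the case $m=8$ is settled immediately: $P\cong P_8\subseteq G$, and any vertex $u\notin V(P)$ would give $P_8\subseteq G-u$, a contradiction, so $G=P_8$.

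The substance is the range $5\le m\le 7$. Here $\diam(G)\le 6$, so every $G-v$ has a longest path on at most seven vertices and is automatically $P_8$-free; the only binding conditions are therefore $\es(G)=4$ together with $\es(G-v)\le 3$ for all $v$. Since the first $m$ entries of the pattern $``1\,2\,1\,3\,1\,2\,1"$ give an $S$-packing $3$-coloring of $P_m$ for $m\le 7$ (as in the proof of Lemma~\ref{spc2}), the tree $G$ must carry branches off the interior vertices $v_2,\dots,v_{m-1}$, and maximality of $P$ caps the depth of a branch at $v_i$ by $\min\{i-1,\,m-i\}$, leaving only finitely many admissible shapes for each $m$. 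The plan is to treat $m=5,6,7$ in turn. For each candidate tree I would either exhibit an explicit $S$-packing $3$-coloring of $G$---typically by taking a large independent set as color class $1$ and finishing the few remaining interior vertices with colors $2$ and $3$, the small diameter making the distance constraints easy to satisfy---which shows $\es(G)\le 3$ and discards $G$; or else locate inside $G$ a copy of the relevant critical tree, namely $H_{14}$ (a $P_5$ bearing a pendant at each of its two central vertices) when $m=5$, $H_{13}$ (a $P_6$ bearing a pendant at its second and at its fifth vertex) when $m=6$, and $H_{12}$ (a $P_7$ bearing a pendant at its third vertex) when $m=7$. Since each $H_j$ is itself a $4$-$\es$-vertex-critical tree by Lemma~\ref{spc2}, a vertex of $G$ lying outside such a copy would place $H_j$ inside some $G-v$, again contradicting criticality; hence $G=H_j$. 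This yields exactly $G\in\{P_8\}\cup {\cal H}_{12-14}$.

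The main obstacle is precisely this last, finite-but-delicate enumeration, and the difficulty is twofold. First, one must be genuinely exhaustive over all admissible branch \emph{placements and depths} for $m\in\{5,6,7\}$: deeper branches have to be excluded either because they lengthen $P$, or because they make $G$ properly contain one of $H_{12},H_{13},H_{14}$ and hence fail criticality. Second, for every non-example one must choose the correct independent set and the right placement of colors $2$ and $3$ to certify $\es(G)\le 3$; this is where the bulk of the case-checking sits.
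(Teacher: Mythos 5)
Your global strategy coincides with the paper's: take a longest path $P$, get $n(P)\le 8$ from the criticality of $P_8$ and $n(P)\ge 5$ since otherwise $\es(G)\le 3$, settle $n(P)=8$ as $G=P_8$, and for $n(P)\in\{5,6,7\}$ either exhibit an $S$-packing $3$-coloring or locate a copy of $H_{14}$, $H_{13}$, $H_{12}$ and invoke vertex-criticality. Your boundary cases are handled correctly. The genuine gap is in the core step: your reduction to ``only finitely many admissible shapes'' is false. Bounding the \emph{depth} of a branch at $v_i$ by $\min\{i-1,m-i\}$ does not bound the \emph{number} of branches, so for each $m\in\{5,6,7\}$ there are infinitely many trees whose longest path has $m$ vertices (attach arbitrarily many pendant vertices to the interior vertices), and a shape-by-shape enumeration with ad hoc colorings cannot be exhaustive. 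What is needed is an argument uniform in the branching, and this is exactly what the paper supplies: degree dichotomies at the central vertices of $P$, certified by layered colorings around a suitable vertex $x$ of the form $\varphi(N_G(x)\cup N^3_G(x))=1$, $\varphi(N^2_G(x))=2$, $\varphi(x)=3$. For instance, for $n(P)=7$ with center $d$: if every $x\in N_G(d)$ has $d_G(x)=2$, then the vertices of $N^2_G(d)$ are pairwise at distance $4>s_2=3$, the eccentricity of $d$ is at most $3$, and the displayed map is an $S$-packing $3$-coloring of $G$ no matter how many branches $d$ carries; hence some neighbor of $d$ has degree at least $3$, so $H_{12}\subseteq G$, and criticality of $H_{12}$ (Lemma~\ref{spc2}) forces $G=H_{12}$.

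Beyond the misjustified finiteness claim, your plan omits the structural dichotomies that make the case analysis work, and in one place misallocates the tools: in the paper's $n(P)=6$ case, $H_{14}$ (which you reserve for $n(P)=5$) is used first, to show that every edge $st\in E(P)\setminus\{ab,ef\}$ has an endpoint of degree $2$ --- otherwise $H_{14}\subseteq G-x$ for some vertex $x$, contradicting the criticality of $G$ via Lemma~\ref{spc4} --- and two further layered colorings then force $d_G(c)=d_G(d)=2$ and $d_G(b),d_G(e)\ge 3$, i.e., $H_{13}\subseteq G$. Your closing mechanism --- once $H_j\subseteq G$, any vertex outside the copy would leave $H_j$ inside $G-v$, so $G=H_j$ --- is correct and is what the paper uses, but without the uniform coloring arguments above, the ``only if'' direction for $5\le n(P)\le 7$, which is the entire substance of the theorem, remains unproved.
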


\begin{proof}
Let $G$ be 4-$\es$-vertex-critical and acyclic. Denote by $P$ a longest path in $G$. If $n(P) = 8$, then Lemma~\ref{spc2} implies that $G = P_8$. Since $\es(G)\leq 3$ when $n(P)\leq 4$, it remains to consider the cases $5\leq n(P)\leq 7$.

Suppose $n(P) = 5$ and let $P=abcde$. If $d_G(c)=2$, then a coloring $\varphi$ with $\varphi(\{c\}\cup N^2_G(c))=1$, $\varphi(b)=2$, and $\varphi(d)=3$ is an $S$-packing 3-coloring of $G$, contradicting the fact that $\es(G)=4$, hence $d_G(c)\ge3$. If $d_G(x)\le2$ for any $x\in N_G(c)$, then the coloring $\varphi$ with $\varphi(N_G(c))=1$, $\varphi(N^2_G(c))=2$, and $\varphi(c)=3$ is an $S$-packing 3-coloring of $G$, a contradiction. So $G = H_{14}$ by Lemma~\ref{spc2}.

Suppose $n(P) = 6$ and let $P=abcdef$. Then either $d_G(s)=2$ or $d_G(t)=2$ for $st\in E(P)\setminus \{ab,ef\}$, otherwise there is a vertex $x\in V(G)$ such that $H_{14}\subseteq G-x$. If $d_G(c)\ge3$, then a mapping $\varphi$ with $\varphi(N_G(c)\cup N^3_G(c))=1$, $\varphi(N^2_G(c))=2$, and $\varphi(c)=3$ is an $S$-packing 3-coloring of $G$, a contradiction. Thus $d_G(c)=d_G(d)=2$. If $d_G(b)=2$, then a mapping $\varphi$ with $\varphi(N_G(e)\cup N^3_G(e))=1$, $\varphi(a)=\varphi(e)=2$, and $\varphi(c)=3$ is an $S$-packing 3-coloring of $G$. Thus $d_G(b)\geq3$ and $d_G(e)\geq3$. Hence $G= H_{13}$ by Lemma~\ref{spc2}.

Let finally $P=abcdefg$. If $d_G(x)=2$ for any $x\in N_G(d)$, a mapping $\varphi$ with $\varphi(N_G(d)\cup N^3_G(d))=1$, $\varphi(N^2_G(d))=2$, and $\varphi(d)=3$ is an $S$-packing 3-coloring of $G$ contradicting the fact $\es(G)=4$.  Hence $G = H_{12}$ by Lemma~\ref{spc2}.
\end{proof}

Combining Theorem~\ref{spca} with Theorem~\ref{spcthm2} we get:

\begin{cor}\label{cor:S=134}
Let $S\in \c134$ and let $G$ be a graph. Then $G$ is $4$-$\es$-vertex-critical if and only if
$$G\in \{K_4, C_5, C_6, C_8, P_8\} \cup \mathcal{C}_5\cup \mathcal{C}_6\cup  {\cal H}_{1-5,7,9,11-15}\,.$$
\end{cor}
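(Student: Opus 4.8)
The plan is to obtain the characterization for the final corollary simply by merging the two completed cases, since every graph with a cycle is covered by Theorem~\ref{spcthm2} and every acyclic graph is covered by Theorem~\ref{spca}. Concretely, I would first observe that these two theorems partition all graphs: any graph either contains a cycle or is acyclic, and the two hypotheses ``$G$ has a cycle'' and ``$G$ is acyclic'' are mutually exclusive and exhaustive. Thus a graph $G$ is $4$-$\es$-vertex-critical for $S\in \c134$ if and only if it falls into exactly one of the two lists.

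Next I would take the union of the two conclusion sets and simplify. From Theorem~\ref{spcthm2} the cyclic $4$-$\es$-vertex-critical graphs are
$$\{K_4, C_5, C_6, C_8\} \cup \mathcal{C}_5\cup \mathcal{C}_6\cup {\cal H}_{1-5,7,9,11,15}\,,$$
and from Theorem~\ref{spca} the acyclic ones are $\{P_8\} \cup {\cal H}_{12-14}$. The only bookkeeping step is to verify that $H_{12}, H_{13}, H_{14}$ are indeed acyclic (so that they genuinely belong to the second list rather than being double-counted) and that $H_9, H_{11}, H_{15}$ contain cycles; this is immediate from Fig.~\ref{spcfig2}. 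Combining the index sets $\{1,2,3,4,5,7,9,11,15\}$ and $\{12,13,14\}$ and re-sorting gives ${\cal H}_{1-5,7,9,11-15}$, which matches the stated union. Adding $P_8$ to $\{K_4,C_5,C_6,C_8\}$ accounts for the path families, yielding exactly the claimed set.

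There is essentially no mathematical obstacle here: the corollary is a routine amalgamation, and the entire content rests on the correctness of Theorems~\ref{spcthm2} and~\ref{spca}, which have already been proved. The only point requiring a moment of care is confirming that the two theorems together cover \emph{every} graph $G$ (not merely connected ones), but this follows since Lemma~\ref{spc5} guarantees that any $\es$-vertex-critical graph is connected, and connectedness plays no role in the cyclic/acyclic dichotomy used to split the argument. Hence the forward direction (membership forces one of the two lists) and the reverse direction (each listed graph is $4$-$\es$-vertex-critical, already established in Lemmas~\ref{KCH} and~\ref{spc2} and the two theorems) both transfer verbatim.

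\begin{proof}
Every graph either contains a cycle or is acyclic, and these two cases are mutually exclusive. If $G$ is a graph with a cycle, then by Theorem~\ref{spcthm2}, $G$ is $4$-$\es$-vertex-critical if and only if $G\in \{K_4, C_5, C_6, C_8\} \cup \mathcal{C}_5\cup \mathcal{C}_6\cup {\cal H}_{1-5,7,9,11,15}$. If $G$ is acyclic, then by Theorem~\ref{spca}, $G$ is $4$-$\es$-vertex-critical if and only if $G\in \{P_8\} \cup {\cal H}_{12-14}$. Since $H_{12}$, $H_{13}$, and $H_{14}$ are acyclic while the remaining graphs in the first list contain a cycle, the two families are disjoint, and their union is precisely
$$\{K_4, C_5, C_6, C_8, P_8\} \cup \mathcal{C}_5\cup \mathcal{C}_6\cup {\cal H}_{1-5,7,9,11-15}\,,$$
as claimed.
\end{proof}
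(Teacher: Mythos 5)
Your proposal is correct and follows exactly the paper's route: the paper states this corollary with no separate proof beyond the remark that it is obtained by combining Theorem~\ref{spca} with Theorem~\ref{spcthm2}, which is precisely the cyclic/acyclic case split and union of index sets you carried out. Your extra bookkeeping (checking that ${\cal H}_{12-14}$ are acyclic and ${\cal H}_{9,11,15}$ contain cycles, and invoking Lemma~\ref{spc5} for connectivity) is harmless and, if anything, slightly more careful than the paper itself.
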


%%%%%%%%%%%%%%%%%%%%%%%%%%%%%%%%
\subsection{$4$-$\es$-vertex-critical graphs for $S\in \d133$}
%%%%%%%%%%%%%%%%%%%%%%%%%%%%%%%%

In this subsection we consider packing sequences $S\in \d133$. In Lemmas~\ref{pcn}, \ref{spc3}, and \ref{HH145}, we present some graphs that are 4-$\es$-vertex-critical. After that we characterize $4$-$\es$-vertex-critical graphs by distinguishing the distance between vertices of degree at least $3$.

\begin{lem}\label{pcn}
If $S\in \d133$, then the following hold.
\begin{itemize}
\item[(a)] If $n\geq 4$, then $\es(P_n)=3$.
\item[(b)] Let $n\ge3$. If $n=3$ or $n\equiv0\bmod{4}$, then $\es(C_n)= 3$. If $n\equiv1,2\bmod{4}$, or $n\equiv3\bmod{4}$ and $s_4<\lfloor n/2 \rfloor$, then $\es(C_n)=4$; otherwise, $\es(C_n)=5$. Moreover, $C_n$ is $\es$-vertex-critical when $n\not\equiv 0\bmod 4$ and $n\ge 5$.
\end{itemize}
\end{lem}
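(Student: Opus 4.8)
The plan is to prove the two parts separately. Throughout I use that under any $S\in\d133$ the colour classes obey $s_1=1$ and $s_2=s_3=3$, so class~$1$ is an independent set, while every class with colour $\ge 2$ consists of vertices pairwise at distance $\ge 4$ (and class~$4$ additionally at distance $>s_4$). For part~(a) the upper bound $\es(P_n)\le 3$ comes from a single periodic pattern: colour $P_n$ by $1\,2\,1\,3\,1\,2\,1\,3\cdots$, i.e.\ colour~$1$ on all odd vertices and colours~$2,3$ alternating on the even ones; then colour~$1$ is independent and consecutive vertices of colour~$2$ (resp.\ colour~$3$) are at distance~$4$, so this is a valid $S$-packing $3$-colouring for every $n$. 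For the matching lower bound, $\diam(P_4)=3\le s_2$, so Proposition~\ref{spc6} gives $\es(P_4)=4-\alpha(P_4)+1=3$, and since $P_4\subseteq P_n$ for $n\ge 4$, Lemma~\ref{spc4} yields $\es(P_n)\ge\es(P_4)=3$.

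Part~(b) rests on two counting facts for $C_n$: $\alpha(C_n)=\lfloor n/2\rfloor$, and any set of vertices pairwise at distance $\ge 4$ has size at most $\lfloor n/4\rfloor$ (list the set cyclically; each of the cyclic gaps is then $\ge 4$ and the gaps sum to $n$). Since at most one colour class (colour~$1$) is merely independent and every other needs pairwise distance $\ge 4$, the classes of any $S$-packing $3$-colouring together contain at most $\lfloor n/2\rfloor+2\lfloor n/4\rfloor$ vertices; this equals $n$ exactly when $4\mid n$ and is $<n$ otherwise, giving $\es(C_n)\ge 4$ for $n\ge 5$ with $n\not\equiv 0\pmod 4$. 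When $n\equiv 3\pmod 4$ one has $\lfloor n/2\rfloor+2\lfloor n/4\rfloor=n-2$, so a $4$-colouring must use colour~$4$ at least twice, which requires two vertices at distance $>s_4$; as $\diam(C_n)=\lfloor n/2\rfloor$, this is possible only if $s_4<\lfloor n/2\rfloor$, while if $s_4\ge\lfloor n/2\rfloor$ then colour~$4$ is usable at most once, whence a $4$-colouring covers $\le n-1$ vertices and $\es(C_n)\ge 5$.

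The matching upper bounds are explicit colourings: $(1\,2\,1\,3)$ repeated when $4\mid n$; $(1\,2\,1\,3)^{m}4$ when $n=4m+1$; $(1\,2\,1\,3)^{m}1\,4$ when $n=4m+2$; and $(1\,2\,1\,3)^{m}1\,4\,5$ when $n=4m+3$ with $s_4\ge\lfloor n/2\rfloor$, each of which is verified by the same distance check as above. The remaining case $n=4m+3$ with $3\le s_4<\lfloor n/2\rfloor$ (equivalently $s_4\le 2m$) is handled by placing the two colour-$4$ vertices at cyclic distance $\lfloor n/2\rfloor=2m+1\ge s_4+1$, which splits $C_n$ into arcs on $2m$ and $2m+1$ vertices, and then colouring each arc by a $1\,2\,1\,3$-type pattern that starts (and, on the odd arc, also ends) with colour~$1$.

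The verification of this last colouring at the two colour-$4$ separators is the step I expect to be the main obstacle. Because the two arc-lengths have opposite parities, a naive $1\,2\,1\,3$ pattern on the even arc would place a colour-$2$ (or colour-$3$) vertex only distance~$3$ from the nearest like-coloured vertex of the odd arc; so a parity-dependent adjustment of the even arc's pattern (for instance $1\,3\,1\,2\,1\,3$ in place of $1\,2\,1\,3\,1\,2$) is needed to keep all like-coloured pairs straddling a separator at distance $\ge 4$, and I would organise this verification by the residue of $m\bmod 2$. Finally, criticality follows from part~(a): $C_n$ is vertex-transitive, so it suffices to note $C_n-v=P_{n-1}$, and for $n\ge 5$ (hence $n-1\ge 4$) we obtain $\es(C_n-v)=\es(P_{n-1})=3<4\le\es(C_n)$ whenever $n\not\equiv 0\pmod 4$, so $C_n$ is $\es$-vertex-critical.
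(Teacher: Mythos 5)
Your proposal is correct, and its lower-bound half takes a genuinely different route from the paper. Where the paper gets $\es(C_n)\ge4$ for $n\not\equiv0\bmod4$ by invoking the known values of $\chi_\rho(C_n)$ (Proposition~\ref{spc7}) through the dominance $(1,3,3,\ldots)\succeq(1,2,3,\ldots)$, you count directly: colour $1$ occupies at most $\lfloor n/2\rfloor$ vertices, colours $2$ and $3$ at most $\lfloor n/4\rfloor$ each, and $\lfloor n/2\rfloor+2\lfloor n/4\rfloor<n$ unless $4\mid n$. The payoff comes in the case $n\equiv3\bmod4$ with $s_4\ge\lfloor n/2\rfloor$: the paper proves $\es(C_n)\ge5$ by a structural forcing argument (a unique colour-$4$ vertex, a claim that every edge avoiding it has an endpoint coloured $1$, then a pattern analysis trapping two colour-$2$ vertices at distance $3$), while your counting settles it in three lines, since colours $1$--$3$ cover at most $n-2$ vertices and colour $4$ is usable at most once because $s_4\ge\diam(C_n)$; this is shorter, self-contained, and reuses the deficiency bound $n-2$ already established, whereas the paper's analysis produces structural information it never exploits. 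The upper bounds coincide with the paper's, including in the one delicate case $n=4m+3$, $s_4<\lfloor n/2\rfloor$, where the paper's displayed colouring is exactly your construction carried out: the two $4$'s sit at distance $\lfloor n/2\rfloor$ (vertices $v_{2m+1}$ and $v_{4m+2}$) and colours $2,3$ are swapped across one of the two arcs. You leave the verification there at sketch level, but it completes routinely and needs no case split on $m\bmod2$: since both arcs begin with colour $1$ beside each separator, every like-coloured cross-separator pair is at distance at least $4$ except the single pair of non-$1$ vertices flanking the separator at which the even arc ends in a non-$1$ colour (distance exactly $3$), and choosing the even arc's alternation so that its last non-$1$ colour differs from the odd arc's neighbouring one --- your $2$/$3$ swap --- eliminates precisely that pair. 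Your criticality argument is the paper's, stated slightly more carefully (via $C_n-v\cong P_{n-1}$; the paper writes $\es(P_n)\le3$ where $P_{n-1}$ is meant), and your use of Proposition~\ref{spc6} on $P_4$ together with Lemma~\ref{spc4} is a legitimate substitute for the paper's unproved remark that $\es(P_n)\ge3$ in part~(a).
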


\begin{proof}
(a) Note that $\es(P_n)\geq 3$ for $n\geq 4$. The pattern $``1~2~1~3~1~2~1~3\ldots"$  is an  $S$-packing 3-coloring of $P_n$. Thus $\es(P_n)=3$ for $n\geq 4$.

(b) First, $\es(C_n)\geq 3$ for $n\geq 3$. The pattern $``1~2~3"$  gives an $S$-packing  3-coloring of $C_3$ and the pattern $``1~2~1~3~1~2~1~3\ldots1~2~1~3"$ gives an $S$-packing  3-coloring of $C_n$ when $n\equiv 0 \bmod{4}$. Thus $\es(C_n)=3$ when $n=3$ or $n \equiv 0 \bmod{4}$.

Next, if $n\ge4$ and $n\not\equiv 0 \bmod{4}$, then since $(1,3,3)\succeq(1,2,3)$,  we have $\es(C_n)\ge4$ by Proposition~\ref{spc7}. The pattern $``1~2~1~3~1~2~1~3 \ldots 1~2~1~3~4"$ gives an $S$-packing  4-coloring of $C_n$ when $n\equiv 1 \bmod{4}$ and the pattern $``1~2~1~3~1~2~1~3\ldots1~2~1~3~1~4"$ gives an $S$-packing 4-coloring of $C_n$ when $n\equiv 2 \bmod{4}$. Thus $\es(C_n)=4$ when $n\equiv1,2  \bmod{4}$.

Consider now the case $n\equiv 3 \bmod{4}$. When $n= 4k + 3$, $n\ge 7$, and $s_4<\lfloor n/2 \rfloor$, we give an $S$-packing 4-coloring $\varphi$ of $C_n = v_0v_1\ldots v_{n-1}v_0$ as:
$$ \varphi(v_i) =
\left\{\begin{array}{ll}
1; & (i\equiv 0 \bmod{4})~\text{or}~(i\equiv 2 \bmod{4}~\text{and}~i\neq 4k+2), \\
2; & (i\equiv 3 \bmod{4}~\text{and}~i<2k+1)~\text{or}~ (i \equiv 1\bmod{4}~\text{and}~i>2k+1),\\
3; & (i\equiv 1 \bmod{4}~\text{and}~i<2k+1)~\text{or}~ (i\equiv 3\bmod{4}~\text{and}~i>2k+1),\\
4; & i\in \{2k+1, 4k+2\}.
\end{array}
\right.$$
Hence $\es(C_n)=4$ when $n\equiv 3\bmod{4}$, $n\ge7$, and $s_4<\lfloor n/2\rfloor$.

When $n= 4k + 3$, $n\ge7$, and $s_4\ge\lfloor n/2 \rfloor$, the pattern ``1 2 1 3 1 2 1 3\ldots 1 2 1 3 1 4 5" is an $S$-packing 5-coloring of $C_n$. Hence  $4\le\es(C_n)\le 5$. Now suppose that there is an $S$-packing 4-coloring $\varphi$ of $C_n$. Since $s_4\ge \lfloor n/2 \rfloor$, we have $|\varphi^{-1}(4)|=1$. Without loss of generality we may assume that $\varphi(v_0)=4$.
We claim that for any edge in $C_n$ which is not incident with $v_{0}$, one of its endpoints  receives color $1$. Suppose on the contrary that there is an edge $v_iv_{i+1}\in E(C_n)$ such that $\{\varphi(v_i), \varphi(v_{i+1})\} = \{2,3\}$, where $1\le i\le n-2$. Since $n\ge7$, one of $v_{i-2}$ and $v_{i+3}$ (indices taken modulo $n$) cannot be colored under $\varphi$. This contradiction proves the claim. Since $s_2=s_3=3$, we only need to consider two cases: $\varphi(v_1)=2$ and  $\varphi(v_1)=1$.
If  $\varphi(v_1)=2$, then the colors of $v_0,v_1,\ldots,v_{4k+2}$ under $\varphi$ can be described as the pattern ``4 2 1 3 1 2 1 3 1\ldots 2 1 3 1 2 1". We have $\varphi(v_1)=\varphi(v_{4k+1})=2$ with $d_{C_n}(v_1,v_{4k+1})=3\le s_2$, a contradiction. If  $\varphi(v_1)=1$, then we may without loss of generality assume $\varphi(v_2)=2$. Then the colors of $v_0,v_1,\ldots,v_{4k+2}$ under $\varphi$ can be described as the pattern ``4 1 2 1 3 1 2 1 3\ldots 1 2 1 3 1 2''.
However, we have $\varphi(v_2)=\varphi(v_{4k+2})=2$ with $d_{C_n}(v_2,v_{4k+2})=3\le s_2$, a contradiction. Therefore $\es(C_n)=5$ when $n\equiv 3\bmod{4}$, $n\geq 7$, and $s_4\ge\lfloor n/2 \rfloor$.

If $n\not\equiv 0\bmod 4$, then $C_n$ is $\es$-vertex-critical because $\es(P_n)\le 3$ and $\es(C_n)\ge 4$.
\end{proof}

Let $G_{2k}$, $k\ge 3$, be the graph obtained from the path $P_{2k}$ by attaching a pendent vertex to each of the two support vertices of $P_{2k}$. Equivalently, $G_{2k}$ is obtained from $P_{2k-2}$ by attaching two pendant vertices to each of the two leaves of $P_{2k-2}$.

\begin{lem}\label{spc3}
If $S\in \d133$ and $k\ge 3$, then $G_{2k}$ is $4$-$\es$-vertex-critical.
\end{lem}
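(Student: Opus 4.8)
The plan is to prove three things: $\es(G_{2k})\le 4$, $\es(G_{2k})\ge 4$, and $\es(G_{2k}-u)\le 3$ for every vertex $u$. Throughout I would write the spine of $G_{2k}$ as $u_1u_2\cdots u_{2k-2}$, let $a,b$ be the two leaves attached at $u_1$ and $c,d$ the two leaves attached at $u_{2k-2}$; note that the spine has even length and that, since $s_2=s_3=3$, colors $2$ and $3$ are interchangeable in any $S$-packing coloring, while a vertex of color $i\in\{2,3\}$ forbids color $i$ on every vertex within distance $3$. For the upper bound I would simply exhibit a coloring: set $a=b=c=d=1$, give $u_{2k-2}$ the color $4$, and color $u_1,\dots,u_{2k-3}$ by the repeating pattern $2\,1\,3\,1\,2\,1\,3\,1\cdots$. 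The color-$2$ and color-$3$ spine vertices then lie at mutual distance $4$, color $4$ is used exactly once, and no two color-$1$ vertices are adjacent, so this is a valid $S$-packing $4$-coloring.

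The lower bound is the heart of the argument. I would first show that $u_1\ne 1$ and $u_{2k-2}\ne 1$ in any $S$-packing $3$-coloring $\varphi$. The key observation is that $\{\varphi(a),\varphi(b)\}=\{2,3\}$ is impossible: if $\varphi(a)=2$ and $\varphi(b)=3$, then $u_1$ can be neither $2$ nor $3$, so $\varphi(u_1)=1$, and then $u_2$ is barred from $1$ (adjacent to $u_1$), from $2$ (distance $2$ from $a$) and from $3$ (distance $2$ from $b$), a contradiction. Hence at least one of $a,b$ receives color $1$, forcing $u_1\ne 1$, and symmetrically $u_{2k-2}\ne 1$. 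Assuming without loss of generality $\varphi(u_1)=2$, I would then prove by a block induction that the entire spine is forced: $\varphi(u_i)=2$ for $i\equiv 1$, $\varphi(u_i)=3$ for $i\equiv 3\pmod 4$, and $\varphi(u_i)=1$ for even $i$. The base forces $u_2u_3u_4=1\,3\,1$ (these three vertices all avoid $2$, at most one of them can be $3$ since they are pairwise within distance $3$, and color $1$ is independent), and each subsequent deduction rules out both non-$1$ colors at a spine vertex using an already-colored witness at distance $\le 3$. Since $2k-2$ is even, this forces $\varphi(u_{2k-2})=1$, contradicting $u_{2k-2}\ne 1$. Therefore no $S$-packing $3$-coloring exists and $\es(G_{2k})=4$.

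For vertex-criticality I would split into three symmetry classes of $u$. Deleting an interior spine vertex $u_i$ ($2\le i\le 2k-3$) breaks $G_{2k}$ into two components, each a spine with a double leaf at one end and a free end; coloring the leaves $1$ and running $2\,1\,3\,1\cdots$ from the double-leaf end $3$-colors each component separately, which suffices since the two components are disjoint. Deleting a support vertex $u_1$ isolates $a,b$ (color them $1$) and leaves a single such broom. Deleting a leaf, say $b$, removes exactly the forcing at the $u_1$ end: here I would instead set $\varphi(a)=3$, $\varphi(u_1)=1$, $\varphi(c)=\varphi(d)=1$, and run the pattern $1\,2\,1\,3\,1\,2\cdots$ along the spine, which places a non-$1$ color on $u_{2k-2}$ and respects all distance constraints. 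In each case $\es(G_{2k}-u)=3<4$, completing the proof.

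The step I expect to be most delicate is the spine-forcing in the lower bound: one must verify that the local constraints leave no freedom at any spine vertex and that the propagation reaches $u_{2k-2}$ for both residues $2k-2\equiv 0,2\pmod 4$. Organizing the argument as an induction over length-$4$ blocks, in which every deduction cites a previously colored vertex at distance at most $3$, is what makes this uniform and removes the need for separate endpoint cases.
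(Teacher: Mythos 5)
Your proof is correct and follows essentially the same route as the paper: an explicit $S$-packing $4$-coloring for the upper bound, a color-forcing propagation along the spine (your block induction makes rigorous what the paper states more tersely via its ``one of $v_i,v_{i+1}$ gets color $1$'' claim) ending in a contradiction at the far support vertex, and componentwise $3$-colorings with patterns $1\,2\,1\,3\ldots$ and $2\,1\,3\,1\ldots$ for criticality. The only differences are cosmetic, e.g.\ you place color $4$ on a support vertex rather than a pendant, and you derive $\varphi(u_1)\neq 1$ from the two leaves instead of from the three neighbors of the support vertex.
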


\begin{proof}
Let $P_{2k}=v_1v_2\ldots v_{2k}$, and let $v'_2$ and $v'_{2k-1}$ be the pendent vertices attached to $v_2$ and $v_{2k-1}$,
respectively. Coloring the vertices of $P_{2k}$ with the pattern ``1 2 1 3 1 2 1 3\ldots" and the vertices $v'_2$ and $v'_{2k-1}$ with 1 and 4, respectively, we get $\es(G_{2k})\le 4$.

Suppose now that $G_{2k}$ admits an $S$-packing 3-coloring $\varphi$. Observe that $\varphi(v_2)\in \{2,3\}$, without loss of generality assume that $\varphi(v_2)=2$. Then we have $\varphi(v_1)=1$ and $\varphi(v_3)=1$, for otherwise $\varphi(v_3)=\varphi(v_4)=1$ or $\varphi(v_4)=\varphi(v_5)=1$. If $2\leq i\leq 2k-2$, then at least one of $v_i$ and $v_{i+1}$ must be colored 1. Indeed, if we would have $\varphi(v_i)=2$ and $\varphi(v_{i+1})=3$, then $v_{i-2}$ or $v_{i+3}$ can not be colored under $\varphi$. Thus we have $\varphi(v_{2k-2})=2$ and $\varphi(v_{2k-1})=1$, or $\varphi(v_{2k-2})=3$ and $\varphi(v_{2k-1})=1$. However, this implies that $v'_{2k-1}$ or $v_{2k}$ can not be colored under $\varphi$,  a contradiction. Hence, $\es(G_{2k})=4$.

If $v\in G_{2k}$, then an $S$-packing 3-coloring of $G_{2k}-v$ can be given by coloring a longest path of each component of $G_{2k}-v$ with either the pattern $``1~2~1~3~\ldots"$ or the pattern $``2~1~3~1~\ldots"$ and coloring the pendent vertices with 1. Therefore $G_{2k}$ is 4-$\es$-vertex-critical.\end{proof}

\begin{lem}\label{HH145}
If $S\in \d133$, then the graphs $H_{14}$ and $H_{15}$ are $4$-$\es$-vertex-critical.
\end{lem}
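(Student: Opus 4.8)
The plan is to transport as much as possible from Lemma~\ref{spc2} by domination, and to prove the one remaining inequality by hand. For $S = (1,3,3,s_4,\ldots)\in\d133$, set $s_i'' = \max\{4,s_i\}$ for $i\ge 3$; then $S'' = (1,3,s_3'',s_4'',\ldots)$ is a packing sequence lying in $\c134$ with $S''\succeq S$, so every $S''$-packing coloring is also an $S$-packing coloring and $\es(G)\le\chi_{S''}(G)$ for all $G$. By Lemma~\ref{spc2}, $H_{14}$ and $H_{15}$ are $4$-$\chi_{S''}$-vertex-critical, whence $\chi_{S''}(H_{14})=\chi_{S''}(H_{15})=4$ and $\chi_{S''}(G-x)=3$ for every $G\in\{H_{14},H_{15}\}$ and every $x$. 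Applying $\es\le\chi_{S''}$ gives $\es(H_{14}),\es(H_{15})\le 4$ and $\es(G-x)\le 3$ for all such $x$. (Equivalently one may simply reuse the explicit colorings of Lemma~\ref{spc2}: they place colors $2$ and $3$ only on singletons or on pairs at distance at least $4$, so they remain valid when the requirement on color $3$ weakens from distance $>s_3\ge 4$ to distance $>3$.)

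The one assertion domination cannot supply is the lower bound $\es(H_{14}),\es(H_{15})\ge 4$, since $S''\succeq S$ points the wrong way and relaxing $s_3$ to $3$ could, a priori, create new $3$-colorings; this is the step I expect to be the main obstacle. I would dispose of it by rerunning the forcing arguments of Lemma~\ref{spc2} and checking that every clash they invoke occurs at distance at most $3=s_2=s_3$, hence survives under the weaker sequence. For $H_{14}$: the adjacent vertices $c$ and $d$ both have degree $3$, and since $s_2,s_3\ge 2$ no color-$1$ vertex can have three mutually-close neighbors all colored from $\{2,3\}$; hence $\varphi(c),\varphi(d)\in\{2,3\}$, and adjacency forces $\{\varphi(c),\varphi(d)\}=\{2,3\}$. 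The same distance-$\le 3$ reasoning then forces $\varphi(a)\in\{2,3\}$, and since $a$ lies within distance $3$ of both $c$ and $d$, it clashes with whichever of the two shares its color — exactly the contradiction of Lemma~\ref{spc2}. For $H_{15}$ the identical scheme runs with the adjacent degree-$3$ vertices $c,e$ in place of $c,d$: they take $\{2,3\}$, the cycle vertices $b,d$ are forced to color $1$, and then $a$ is forced into $\{2,3\}$ within distance $3$ of both $c$ and $e$, again a contradiction. The decisive distances are only $2$ and $3$, so these arguments are insensitive to whether $s_3=3$ or $s_3\ge 4$.

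Combining the two parts yields $\es(H_{14})=\es(H_{15})=4$ together with $\es(G-x)\le 3<4$ for every vertex $x$, which is precisely $4$-$\es$-vertex-criticality. The crux throughout is the lower bound: it is the only statement not carried over by $S''\succeq S$, and its proof rests on the observation that the coloring obstructions in $H_{14}$ and $H_{15}$ never require distance-$4$ separation between equal colors, only distance-$\le 3$ collisions, which remain collisions under $s_2=s_3=3$.
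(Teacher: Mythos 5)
Correct, and essentially the paper's own approach: you transfer the vertex-deleted $3$-colorings from the ${\cal S}_{1,3,\overline{4}}$ case by domination ($S''\succeq S$) and then verify $\es(H_{14})=\es(H_{15})=4$ by the same forcing arguments as in Lemma~\ref{spc2}, correctly observing that every clash used there occurs at distance at most $3$ and so survives when $s_3$ drops to $3$. The only cosmetic deviation is that the paper exhibits fresh $4$-coloring patterns and deduces $\es(H_{15})\ge 4$ from $H_{14}\subseteq H_{15}$ via Lemma~\ref{lem:subgraph}, whereas you obtain the upper bounds by domination and rerun the forcing for $H_{15}$ directly.
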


\begin{proof}
Since $H_{14}$ and $H_{15}$ are 4-$\cs$-vertex-critical, where $S'\in \c134$,  and $(1,3,4)\succeq (1,3,3)$,  by Corollary~\ref{cor:S=134} it suffices to show that $\es(H_{14}) =\es(H_{15}) = 4$. The pattern ``4 1 2 3 1 1 1"  is an $S$-packing 4-coloring of $H_{14}$, so $\es(H_{14})\leq 4$.  Suppose that $H_{14}$ admits an $S$-packing 3-coloring $\varphi$. Then  $\{\varphi(c),\varphi(d)\}=\{2,3\}$, and so the vertex $a$ cannot be colored under $\varphi$. It follows that $\es(H_{14})=4$. The pattern ``4 1 2 1 3 1 1" is an $S$-packing 4-coloring of $H_{15}$. Moreover, since $H_{14}\subseteq H_{15}$, by Lemma~\ref{lem:subgraph}, we conclude that $\es(H_{15})=4$.
\end{proof}

Our next result, Theorem~\ref{cnnn}, follows from the following lemma and theorem.

\begin{lem}\label{GCn}
Let $S\in \d133$ and let $n\not\equiv 0\bmod{4}$, $n>3$. If a graph $G$ contains a cycle $C_n$ and $V(G)-V(C_n)\neq \emptyset$, then $G$ is not $4$-$\es$-vertex-critical.
\end{lem}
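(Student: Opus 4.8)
The plan is to argue by contradiction, using subgraph monotonicity of the $S$-packing chromatic number (Lemma~\ref{spc4}) together with the value of $\es(C_n)$ supplied by Lemma~\ref{pcn}(b). So suppose, for contradiction, that $G$ is $4$-$\es$-vertex-critical. By definition this means $\es(G)=4$ and $\es(G-u)\le 3$ for every $u\in V(G)$.

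First I would record the lower bound $\es(C_n)\ge 4$. The hypotheses $n>3$ and $n\not\equiv 0\bmod 4$ force $n\ge 5$, so Lemma~\ref{pcn}(b) applies and gives $\es(C_n)\in\{4,5\}$, in particular $\es(C_n)\ge 4$. (Equivalently, since the first three entries of $S$ dominate those of $(1,2,3)$, any $S$-packing $3$-coloring of $C_n$ would already be a packing $3$-coloring, contradicting $\chi_\rho(C_n)=4$ from Proposition~\ref{spc7}.)

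Next I would invoke the hypothesis $V(G)-V(C_n)\neq\emptyset$ to pick a vertex $v\in V(G)-V(C_n)$. The decisive---and elementary---point is that deleting a vertex lying off the cycle leaves the cycle untouched: every vertex and every edge of $C_n$ still belongs to $G-v$, so $C_n$ is a subgraph of $G-v$. Lemma~\ref{spc4} then yields
$$\es(G-v)\ \ge\ \es(C_n)\ \ge\ 4\,,$$
which contradicts $\es(G-v)\le 3$. Hence $G$ cannot be $4$-$\es$-vertex-critical.

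I do not expect any genuine obstacle here. The only point needing a moment's care is the lower bound $\es(C_n)\ge 4$, and this is handed to us directly by Lemma~\ref{pcn}(b) once one observes that the arithmetic conditions on $n$ keep us out of the regime $n\equiv 0\bmod 4$ (where $\es(C_n)=3$). Conceptually, the whole argument rests on the fact that criticality requires $\es$ to drop below $4$ upon deleting \emph{any} vertex, whereas a vertex external to $C_n$ can never remove the obstruction carried by the cycle.
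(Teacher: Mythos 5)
Your proposal is correct and follows essentially the same route as the paper's own proof: pick $x\in V(G)-V(C_n)$, note $C_n\subseteq G-x$, and apply Lemma~\ref{spc4} together with $\es(C_n)\ge 4$ (from Lemma~\ref{pcn}(b), since $n>3$ and $n\not\equiv 0\bmod 4$ force $n\ge 5$) to contradict criticality. The only difference is that you spell out the lower bound on $\es(C_n)$, which the paper leaves implicit.
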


\begin{proof}
Since $V(G)-V(C_n)\neq\emptyset$, there exists a vertex $x\in V(G)$ such that $C_n\subseteq G-x$. By Lemma~\ref{spc4}, we have $\es(G-x)\ge \es(C_n)\ge 4$, and so $G$ is not 4-$\es$-vertex-critical.
\end{proof}

\begin{thm}\label{CCn56}{\rm \cite[Theorem 4.3]{klavzar-2019}}
If $G$ is a graph that contains a cycle of length $n\ge5$, where $n\not\equiv 0\bmod4$, then $G$ is $4$-$\chi_{\rho}$-vertex-critical if and only if one of the following holds.
\begin{itemize}
\item[$\bullet$] $n=5$ and $G\in \{C_5\}\cup \mathcal{C}_5$,
\item[$\bullet$] $n=6$ and $G\in  \{C_6\}\cup \mathcal{C}_6$,
\item[$\bullet$] $n\ge7$ and $G$ is isomorphic to $C_n$.
\end{itemize}
\end{thm}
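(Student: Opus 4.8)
Throughout I would write $\chi_\rho = \chi_S$ for the packing sequence $S=(1,2,3,\ldots)$, so that Proposition~\ref{spc7} and Lemma~\ref{spc4} are available, and I would prove the two implications separately. For the sufficiency (``if'') direction the plan is to verify directly that every graph in the three listed classes is $4$-$\chi_\rho$-vertex-critical. For $G=C_n$ with $n\ge 5$ and $n\not\equiv 0\bmod 4$, Proposition~\ref{spc7} gives $\chi_\rho(C_n)=4$, while $C_n-v=P_{n-1}$ is a subpath of some $C_{4k}$, so $\chi_\rho(C_n-v)\le \chi_\rho(C_{4k})=3$ by Lemma~\ref{spc4} and Proposition~\ref{spc7}; hence $C_n$ is critical. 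Since $\mathcal{C}_5$ and $\mathcal{C}_6$ consist of finitely many graphs of order $5$ and $6$, their criticality is settled by inspection, exhibiting a packing $4$-coloring of each graph together with packing $3$-colorings of all its vertex-deleted subgraphs, in the same spirit as Lemma~\ref{KCH}.

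For the necessity (``only if'') direction, suppose $G$ is $4$-$\chi_\rho$-vertex-critical and contains a cycle $C_n$ with $n\ge 5$ and $n\not\equiv 0\bmod 4$. Since $\chi_\rho(C_n)=4$ (Proposition~\ref{spc7}) and $C_n\subseteq G$, Lemma~\ref{spc4} gives $\chi_\rho(G)\ge 4$, and criticality forces $\chi_\rho(G)=4$. Next I would show that $C_n$ is a spanning subgraph of $G$: by the same reasoning as in Lemma~\ref{GCn}, if some $x\in V(G)\setminus V(C_n)$ existed, then $C_n\subseteq G-x$ would yield $\chi_\rho(G-x)\ge 4$, contradicting criticality. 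Hence $G$ is obtained from the Hamiltonian cycle $C_n$ by adding chords, and for every vertex $v$ the graph $G-v$ is a path $P_{n-1}$ together with the surviving chords, on which criticality imposes $\chi_\rho(G-v)\le 3$.

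The decisive case is $n\ge 7$, where I want to conclude $G=C_n$. Assume to the contrary that $G$ has a chord $v_0v_k$ with $2\le k\le \lfloor n/2\rfloor$; it splits $C_n$ into a short cycle of length $k+1$ and a long cycle of length $n-k+1\ge \lceil n/2\rceil+1\ge 5$. Deleting an interior vertex of the short arc leaves the long cycle $C_{n-k+1}$ intact, so if $n-k+1\not\equiv 0\bmod 4$ then $\chi_\rho(G-v)\ge \chi_\rho(C_{n-k+1})=4$, a contradiction; symmetrically, deleting an interior vertex of the long arc leaves $C_{k+1}$, ruling out $k+1\ge 5$ with $k+1\not\equiv 0\bmod 4$. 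Thus the only chords surviving this cycle argument are those with $n-k+1\equiv 0\bmod 4$ and with $k+1\le 4$ or $k+1\equiv 0\bmod 4$. These residual configurations cannot be excluded by cycle lengths alone, and eliminating them is the main obstacle: because a chord shortens distances across $C_n$, one must instead locate a vertex $x$ for which three colors no longer suffice in $G-x$, typically a vertex near the short cycle that is adjacent to a color-$1$ vertex while lying within distance $\le 3$ of both a color-$2$ and a color-$3$ representative, thereby forcing a fourth color. That adding a single chord to $C_9$ already destroys criticality (via such a forced fourth color rather than via a long bad cycle) illustrates exactly this phenomenon, and this is the part demanding careful, chord-by-chord bookkeeping.

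Finally, for $n\in\{5,6\}$ the cycle argument above gives no contradiction, since a chord of $C_5$ or $C_6$ creates only cycles of length $3$ and $4$. Here $G$ is a spanning supergraph of $C_5$ (respectively $C_6$), of order $5$ (respectively $6$), so I would enumerate the finitely many chord patterns and check in each case whether $\chi_\rho(G)=4$ and $\chi_\rho(G-v)\le 3$ for all $v$; the patterns that survive are precisely $\{C_5\}\cup\mathcal{C}_5$ and $\{C_6\}\cup\mathcal{C}_6$. Combining the three cases completes the necessity direction and hence the theorem. I expect the genuine difficulty to lie entirely in the $n\ge 7$ residual cases, where the distance-shortening effect of chords, rather than the presence of a forbidden cycle, is what must be exploited.
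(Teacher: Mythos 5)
First, a point of reference: the paper does not prove Theorem~\ref{CCn56} at all --- it is imported verbatim from \cite[Theorem 4.3]{klavzar-2019}, so there is no internal proof to compare against; the closest in-paper analogues of your skeleton are Lemma~\ref{GCn} (the spanning-cycle step) and the chord-elimination style of Theorem~\ref{spcthm2}. Judged on its own, your sufficiency direction is fine ($\chi_\rho(P_{n-1})\le 3$ via Proposition~\ref{spc7} and Lemma~\ref{spc4}, finite inspection for $\mathcal{C}_5$, $\mathcal{C}_6$), as are the spanning-cycle step and the cycle-length elimination of most chords.

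The genuine gap is exactly where you flag it, and flagging it is not the same as closing it: for $n\ge 7$ the residual chords --- those with $n-k+1\equiv 0\bmod 4$ and $k+1\in\{3,4\}$ or $k+1\equiv 0\bmod 4$, which occur precisely when $n\equiv 1,2\bmod 4$ (e.g.\ $C_9$ plus a triangle chord, giving sub-cycles $C_3$ and $C_8$, or $C_{10}$ plus the chord $v_0v_3$, giving $C_4$ and $C_8$) --- are never actually eliminated. You describe a strategy (``locate a vertex $x$ near the short cycle forcing a fourth color'') but carry out no instance of it, and this is the mathematical heart of the necessity direction. For the record, the argument does go through but needs the detailed case analysis you skipped: in $C_9$ with chord $v_0v_2$, deleting the antipodal vertex $v_5$ leaves the triangle $v_0v_1v_2$ with pendant paths $v_2v_3v_4$ and $v_0v_8v_7v_6$; the triangle must receive all three colors, and checking the six assignments shows each dies within distance $3$ of the triangle (e.g.\ with $\varphi(v_0)=2,\varphi(v_2)=3$ one is forced to $\varphi(v_3)=\varphi(v_8)=1$, after which $v_7$ can take neither $2$, being at distance $2$ from $v_0$, nor $3$, being at distance $3$ from $v_2$). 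Your sketch also quietly assumes a single chord: with several residual chords present the distance structure changes, so the bookkeeping must be organized (say, by a shortest chord) rather than done chord-by-chord independently. Until these cases are written out, the proof is a correct plan with its decisive step missing.
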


\begin{thm}\label{cnnn}
Let $S\in \d133$. If $G$ is a graph that contains a cycle of length $n\ge5$, where $n\not\equiv 0 \bmod{4}$, then $G$ is $4$-$\es$-vertex-critical if and only if one of the following holds.
\begin{itemize}
\item[$\bullet$] $n = 5$ and $G\in \{C_5\}\cup \mathcal{C}_5$,
\item[$\bullet$] $n=6$ and $G\in \{C_6\}\cup \mathcal{C}_6$,
\item[$\bullet$] $n\ge7$ and $G= C_n$ except when $n\equiv 3\bmod{4}$ and $s_4\ge\lfloor n/2 \rfloor$.
\end{itemize}
\end{thm}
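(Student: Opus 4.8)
The plan is to let Lemma~\ref{GCn} carry the structural load and then to read the $\es$-data off Lemmas~\ref{pcn} and~\ref{KCH}, using Theorem~\ref{CCn56} as the template for which supergraphs of a cycle can be $4$-critical. Suppose $G$ is $4$-$\es$-vertex-critical and contains a cycle $C_n$ with $n\ge 5$ and $n\not\equiv 0\bmod 4$. Were $V(G)-V(C_n)\ne\emptyset$, Lemma~\ref{GCn} would say $G$ is not $4$-$\es$-vertex-critical; hence $V(G)=V(C_n)$, so $C_n$ is spanning and $G$ is obtained from $C_n$ by adding chords only. This single reduction is the backbone of the whole argument.

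For sufficiency I would quote the earlier lemmas directly. The graphs $C_5,C_6$ and every member of $\mathcal{C}_5\cup\mathcal{C}_6$ satisfy $\diam\le s_2$ and are $4$-$\es$-vertex-critical by Lemma~\ref{KCH}. For $n\ge 7$, Lemma~\ref{pcn}(b) gives both $\es(C_n)=4$ and the vertex-criticality of $C_n$, except precisely when $n\equiv 3\bmod 4$ and $s_4\ge\lfloor n/2\rfloor$, where $\es(C_n)=5$; this is exactly the excluded case, and there $C_n$ is $5$-$\es$-vertex-critical rather than $4$-$\es$-vertex-critical.

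For necessity I would split on $n$. If $n\in\{5,6\}$ then $G$ is a spanning supergraph of $C_n$ with $\diam(G)\le 3$. On a graph of diameter at most $3$ every colour-$i$ class with $i\ge 4$ is a singleton, and since $s_2=3\ge 2$ and $s_3=3$ any $\es$-colouring is then automatically a packing colouring; hence $\chi_\rho(G-v)\le\es(G-v)\le 3$ for every $v$ and $\chi_\rho(G)\le\es(G)=4$. Once $\chi_\rho(G)=4$ is confirmed, $G$ is $4$-$\chi_\rho$-vertex-critical and Theorem~\ref{CCn56} forces $G\in\{C_n\}\cup\mathcal{C}_n$. For $n\ge 7$ the same theorem leaves only $C_n$, so it remains to exclude chords; a chord cuts $C_n$ into cycles of lengths $a,b$ with $a+b=n+2$, and whenever one of them is $\ge 5$ and $\not\equiv 0\bmod 4$ I reapply Lemma~\ref{GCn} to that shorter, non-spanning cycle and contradict criticality.

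The delicate points are two, and they are where I expect the real work to lie. First, for $n=6$ I must confirm $\chi_\rho(G)=4$ rather than $3$: a packing $3$-colouring could legally place colour $2$ on a pair at distance $3$, which is forbidden for $\es$. Here I would use that a distance-$3$ pair in a spanning supergraph of $C_6$ must be antipodal (distances only shrink under chords while $\diam C_6=3$); deleting such a pair leaves two $C_6$-edges on four vertices, so a colour-$1$ class of size $3$ would by pigeonhole contain one of these edges---impossible---whence no such $3$-colouring exists. Second, for $n\ge 7$ the chords that escape Lemma~\ref{GCn} are exactly those both of whose pieces lie in $\{3,4\}$ or are divisible by $4$, which can happen only for $n\equiv 1,2\bmod 4$ (e.g.\ a chord of $C_{10}$ producing a $C_4$ and a $C_8$). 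For these I would argue directly that the short cycle, confronted with the stringent $s_2=s_3=3$ constraints, forces $\es(G-v)=\es(G)=4$ for a well-chosen $v$, so that $G$ fails to be critical; pinning down this last family cleanly is, I expect, the main obstacle.
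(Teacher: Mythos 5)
Your reduction via Lemma~\ref{GCn}, your sufficiency argument (Lemmas~\ref{KCH} and~\ref{pcn}(b), with the exceptional case $n\equiv 3\bmod 4$ and $s_4\ge\lfloor n/2\rfloor$, where $\es(C_n)=5$), and your treatment of $n\in\{5,6\}$ are all sound. The genuine gap is exactly where you locate it yourself: for $n\ge 7$ you are not entitled to invoke Theorem~\ref{CCn56}, because you established ``$G$ is $4$-$\chi_\rho$-vertex-critical'' only through the hypothesis $\diam(G)\le 3$, which fails for $n\ge 7$. You therefore fall back on chord arithmetic with Lemma~\ref{GCn}, and the chords both of whose pieces lie in $\{3,4\}\cup 4\mathbb{Z}$ (e.g.\ a chord of $C_9$ creating $C_3$ and $C_8$, or of $C_{10}$ creating $C_4$ and $C_8$) are left with only a promissory note (``I would argue directly\ldots'') and no actual argument. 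As written, the necessity direction for $n\equiv 1,2\bmod 4$, $n\ge 7$, is unproved.

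The missing idea is that the transfer to $\chi_\rho$ needs no diameter hypothesis at all, and this is what makes the paper's one-line derivation work (the paper gives no detailed proof, asserting only that Theorem~\ref{cnnn} follows from Lemma~\ref{GCn} and Theorem~\ref{CCn56}). Since $(1,3,3)\succeq(1,2,3)$ coordinatewise, every $S$-packing $3$-coloring is automatically a packing $3$-coloring, so criticality gives $\chi_\rho(G-v)\le\es(G-v)\le 3$ for \emph{every} $v$, regardless of diameter; then $\chi_\rho(G)\le\chi_\rho(G-v)+1\le 4$, because $v$ can be returned with a fresh color used exactly once, while $\chi_\rho(G)\ge\chi_\rho(C_n)=4$ by Proposition~\ref{spc7} and Lemma~\ref{lem:subgraph}. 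Hence every $4$-$\es$-vertex-critical graph containing such a $C_n$ is $4$-$\chi_\rho$-vertex-critical, and Theorem~\ref{CCn56} yields the full candidate list for all $n\ge 5$ simultaneously, excluding chords wholesale --- your ``main obstacle'' disappears, and even the case split on $n$ becomes unnecessary. The same observation makes your $n=6$ pigeonhole argument redundant: $\chi_\rho(G)\ge\chi_\rho(C_6)=4$ already follows from Proposition~\ref{spc7} and subgraph monotonicity (your argument is correct, just unneeded).
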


In order to characterize $4$-$\es$-vertex-critical graphs, where $S\in \d133$, we need to distinguish whether there are two vertices of degree at least $3$ that are at odd distance. For this sake we need the following classes of cycles that depend on a positive integer $s_4$ (this $s_4$ will, of course, be the fourth component of a packing sequence $S$):
$$\mathcal{C}_{s_4} = \{C_n, n \ge 5:\  (n\equiv 1,2 \bmod{4}) ~\text{or}~ (n\equiv3 \bmod{4}~\text{and}~s_4<\lfloor n/2 \rfloor\}\,.$$

\begin{thm}\label{h2h4}
Let $S\in \d133$ and let $G$ be a $4$-$\es$-vertex-critical graph. If all the vertices of $G$ of degree at least $3$ are pairwise at even distances in $G$, then $G\in \{H_2, H_4\} \cup \mathcal{C}_{s_4}$.
\end{thm}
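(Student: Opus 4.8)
The plan is first to dispose of the case when $G$ has maximum degree at most $2$, and then, assuming a vertex of degree at least $3$ exists, to cut $G$ down to a triangle with tightly controlled attachments. By Lemma~\ref{spc5}, $G$ is connected, and $\es(G)=4$. If $\Delta(G)\le 2$, then $G$ is a path or a cycle; a path has $\es\le 3$ by Lemma~\ref{pcn}(a) (the cases $n\le 3$ being trivial), contradicting $\es(G)=4$, so $G$ is a cycle, and Lemma~\ref{pcn}(b) together with vertex-criticality forces $G\in \mathcal{C}_{s_4}$ (these are exactly the cycles $C_n$, $n\ge 5$, with $\es(C_n)=4$ that are critical). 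From now on I assume $G$ has a vertex of degree at least $3$, and aim to show $G\in\{H_2,H_4\}$.

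Next I would prune the admissible cycle lengths. If $G$ contained a cycle of length $n\ge 5$ with $n\not\equiv 0\bmod 4$, then by Theorem~\ref{cnnn} (or Lemma~\ref{GCn} applied to the spanning cycle and its chords) $G$ would lie in $\{C_n\}\cup\mathcal{C}_5\cup\mathcal{C}_6$; the pure cycles have no vertex of degree $\ge 3$, while every member of $\mathcal{C}_5$ and $\mathcal{C}_6$ arises by inserting a chord, whose two endpoints are adjacent vertices of degree $\ge 3$ and hence at the odd distance $1$, contradicting the even-distance hypothesis. Thus $G$ has no such cycle, so every odd cycle of $G$ is a triangle and $G$ has no cycle of length $\equiv 2\bmod 4$.

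Now I split on whether $G$ has a triangle. If $G$ is triangle-free, it has no odd cycle at all, hence is bipartite with parts $A,B$; since $G$ is connected and all vertices of degree $\ge 3$ are pairwise at even distance, they all lie in one part, say $A$, and every vertex of $B$ has degree $\le 2$. I would 3-color $G$ by giving color $1$ to all of $B$ and colors $2,3$ to $A$ so that any two $A$-vertices at distance exactly $2$ receive different colors. This is possible because the auxiliary graph on $A$ joining vertices at distance $2$ is bipartite: an odd cycle in it gives a closed walk in $G$ of length $\equiv 2\bmod 4$, which after extracting a simple cycle is a $C_{4m+2}$ (excluded above) unless some $B$-vertex repeats, i.e. has degree $\ge 3$, which is impossible. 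In the resulting coloring, color $1$ occupies one part (pairwise distance $\ge 2>s_1$) and any two equally colored vertices of $A$ are at even distance $\ge 4>s_2=s_3$; hence $\es(G)\le 3$, contradicting $\es(G)=4$. So $G$ must contain a triangle. Establishing bipartiteness of this distance-$2$ graph is the delicate conceptual point of this step.

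Finally, let $T=abc$ be a triangle. As its vertices are pairwise at distance $1$, at most one of them has degree $\ge 3$; since $G\neq K_3$ is connected and $4$-critical, exactly one, say $b$, is a branch vertex and $a,c$ have degree $2$. I would then classify the attachments at $b$ using vertex-criticality against the graphs of Lemmas~\ref{KCH} and~\ref{HH145}: a second triangle at $b$ exhibits the $4$-$\es$-vertex-critical graph $H_2$, which must be spanning, giving $G=H_2$; a pendant path of length $\ge 2$ at $b$ exhibits $H_4$, which must be spanning, giving $G=H_4$; a cycle of length $4$ or $\equiv 0\bmod 4$ through $b$, a longer pendant path, or a second branch vertex (necessarily at distance $\ge 2$ from $b$, since an adjacent one would be at the odd distance $1$) all contain a deletable copy of $H_4$, contradicting criticality; and if only pendant edges attach at $b$ one checks directly that $\es(G)=3$, a contradiction. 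Extra edges among the at most five vertices are then excluded, each creating a distance-$1$ pair of degree-$\ge 3$ vertices or a forbidden cycle, and we conclude $G\in\{H_2,H_4\}$. I expect the main obstacle to be exactly this exhaustive attachment analysis: verifying that every cycle through $b$ of length $\equiv 0\bmod 4$ and every longer pendant branch really does contain a removable $H_4$, and that no additional edges can survive.
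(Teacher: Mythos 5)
Your proposal is correct, but it takes a genuinely different route from the paper's. The paper's proof is organized around a single explicit coloring: fixing a vertex $u$ of degree at least $3$, it colors every vertex by its distance to $u$ modulo $4$ (odd distances get color $1$; distances $\equiv 0$ and $\equiv 2 \bmod 4$ get colors $2$ and $3$); since $\es(G)=4$ this coloring must fail on some pair $x,y$, and a separate claim shows that the branching vertex $w$ of shortest $u,x$- and $u,y$-paths together with a shortest $x,y$-path produces a cycle whose length modulo $4$ is controlled. The residue then either forces $G\in\mathcal{C}_{s_4}$ via Theorem~\ref{cnnn}, or the cycle is a triangle at a branch vertex $w$, giving $H_2$ or $H_4$. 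You instead decompose into three phases: prune cycle lengths up front via Theorem~\ref{cnnn} (correctly noting that every member of $\mathcal{C}_5\cup\mathcal{C}_6$ has a chord whose endpoints are adjacent branch vertices, so these are killed by the even-distance hypothesis); dispose of the triangle-free case by a bipartition argument, where your key step --- bipartiteness of the distance-$2$ auxiliary graph on the branch part --- does go through, precisely because a repeated middle vertex of the associated closed walk would have three distinct neighbors in the branch part while all vertices of the other part have degree at most $2$, so an odd auxiliary cycle yields a genuine simple $C_{4m+2}$, which you have excluded; and finish with a local attachment analysis at the unique branch vertex $b$ of a triangle, which is exhaustive since every vertex other than $b$ has degree at most $2$, so $G-b$ is a disjoint union of paths attached to $b$ at one or both ends, and each case either yields a spanning $H_2$ or $H_4$, a deletable copy of $H_4$, or $\es(G)\le 3$. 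One small wrinkle in your last step: your blanket claim that ``extra edges are excluded'' should be read per branch, since adding to a spanning $H_4$ the edge closing a second triangle at $b$ produces exactly $H_2$ --- an allowed outcome, caught by your second-triangle case rather than by an edge-exclusion argument. Comparing the two: the paper's single mod-$4$ coloring is more economical and silently subsumes your auxiliary-graph bipartiteness (its color classes are BFS layers mod $4$), while your version trades that unifying trick for three elementary, independently checkable phases whose case analysis is more transparent, at the cost of leaning on Theorem~\ref{cnnn} earlier and more heavily.
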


\begin{proof}
By Lemmas~\ref{pcn} and~\ref{KCH}, every graph from $\{H_2, H_4\} \cup \mathcal{C}_{s_4}$ is $4$-$\es$-vertex-critical.  If $\Delta(G)\le2$, then $G \in \{P_n, C_n\}$, hence $G\in \mathcal{C}_{s_4}$. Suppose now that $\Delta(G)\ge 3$ and that all the vertices of degree at least $3$ are pairwise at even distances in $G$. Let $u\in V(G)$ be an arbitrary vertex of degree at least $3$. Then define $\varphi: V(G)\rightarrow[3]$ by:
$$ \varphi(v) =
\left\{\begin{array}{ll}
1; & d_G(u,v)\equiv 1,3 \bmod{4}, \\
2; & d_G(u,v)\equiv 0\bmod{4},\\
3; & d_G(u,v)\equiv 2\bmod{4}.
\end{array}
\right.$$
By Lemma~\ref{spc5}, $G$ is a connected graph, and so $\varphi$ is well-defined.  Since $G$ is 4-$\es$-vertex-critical,  there are two vertices $x,y\in V(G)\setminus \{u\}$ such that $\varphi(x)=\varphi(y)=i$ and $d_G(x,y)\le s_i$ for some $i\in [3]$. Let $P$ and $P'$ be arbitrary shortest $u,x$-path and $u,y$-path in $G$, respectively. Let $w\in V(P)\cap V(P')$ such that $d_G(u,w)$ is as large as possible. Then we have $w\neq x,y$ and $d_G(u,x)=d_G(u,y)$. If $w=x$ or $d_G(u,x)<d_G(u,y)$, then $d_G(u,y)\le d_G(u,x)+s_i<d_G(u,x)+4$, and so $\varphi(x)\neq \varphi(y)$ by the definition of $\varphi$, which leads to a contradiction. Thus we have $d_G(w)\ge3$, and so $\varphi(w)\in \{2,3\}$.

\medskip\noindent
{\bf Claim.} $G$ contains a cycle consisting of  $wPx$, $wP'y$, and a shortest $x,y$-path.

\medskip\noindent
{\bf Proof.}
Let $P''$ be a shortest $x,y$-path. By the choice of $w$, it suffices to show that $(V(P'')\cap V(P))\setminus\{x\} = (V(P'')\cap V(P'))\setminus\{y\}=\emptyset$.

Suppose that $(V(P'')\cap V(P))\setminus\{x\}\neq \emptyset$. Note that this can only happen when $\varphi(x)=\varphi(y)\in \{2,3\}$ and $d_G(x,y)\in \{2,3\}$. Without loss of generality, let $\varphi(x)=\varphi(y)=3$. If $d_G(x,y)=2$, let $P''=xzy$. Then $d_G(z)\geq 3$ and $d_G(u,z)=d_G(u,x)-1\equiv 1\bmod{4}$, contradicting the fact that the vertices of degree at least $3$ are at even distance. For $d_G(x,y)=3$, let $P''=xz_1z_2y$. Then $xz_2\notin E(G)$. If $z_2\in V(P'')\cap V(P)$, then $d_G(u,y)\le d_G(u,z_2)+d_G(z_2,y)=d_G(u,x)-2+1<d_G(u,x)$, contradicting the fact that $d_G(u,x)=d_G(u,y)$. Therefore $z_2\notin V(P'')\cap V(P)$. However, if $z_1\in V(P'')\cap V(P)$, then $d_G(z_1)\ge 3$ and $d_G(u,z_1)=d_G(u,x)-1\equiv 1\bmod4$, a contradiction. Therefore $G$ contains a cycle, say $C_n$, consisting of $wPx$, $wP'y$ and a shortest $x,y$-path, and so $n=2d_G(u,x)-2d_G(u,w)+d_G(x,y)$. \qed

\medskip
Suppose $\varphi(x)=\varphi(y)=1$ and $d_G(x,y)=1$. Then we have  $n\equiv 3\bmod{4}$. If $n\equiv 3\bmod{4}$ and $n>3$, then $G = C_n$ with $s_4<\lfloor n/2\rfloor$ by Theorem~\ref{cnnn}. If $n=3$, then $wxy$ is a triangle with $d_G(w)\ge 3$ and $d_G(x)=d_G(y)=2$. If $N^2_G(w)\neq\emptyset$, then $G = H_4$. If $N^2_G(w)=\emptyset$ and $N_G(w)\setminus\{x,y\}$ is an independent set, then a mapping $\varphi$ with $\varphi(N_G(w)\setminus\{x,y\})=\varphi(x)=1$, $\varphi(w)=2$ and $\varphi(y)=3$ is an $S$-packing 3-coloring of $G$. Moreover, it is easy to see that no edge can be added to $H_2$ and to $H_4$. Hence $G\in \{H_2, H_4\}$ when $n=3$. Suppose $\varphi(x)=\varphi(y)=2$ or $\varphi(x)=\varphi(y)=3$. If $d_G(x,y)=i$, $i\in [3]$, then $n \equiv i\bmod{4} \ge 5$ because $d_G(u,x)$ and $d_G(u,w)$ are even. Therefore $G\in \mathcal{C}_{s_4}$ by Theorem~\ref{cnnn}.
\end{proof}

\begin{thm}\label{aod}
Let $S\in \d133$ and let $G$ be a $4$-$\es$-vertex-critical graph in which there exist two vertices of degree at least $3$ that are at odd distance. Then
$$G\in \{K_4\} \cup {\cal H}_{1, 3, 5, 7, 14, 15}\cup \{G_{2k}:\ k\ge3 \}\cup \mathcal{C}_5\cup \mathcal{C}_6\,.$$
\end{thm}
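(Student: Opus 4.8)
The plan for the forward direction is to invoke the graphs already shown to be critical: by Lemma~\ref{KCH} the graph $K_4$ and all members of $\mathcal{C}_5\cup\mathcal{C}_6\cup{\cal H}_{1,3,5,7}$ are $4$-$\es$-vertex-critical for every $S\in\a13$, by Lemma~\ref{HH145} so are $H_{14}$ and $H_{15}$, and by Lemma~\ref{spc3} so is each $G_{2k}$, $k\ge3$. Each of these graphs moreover possesses two vertices of degree at least $3$ at odd distance: a chord of $C_5$ or $C_6$ joins two vertices that thereby become adjacent and of degree $3$; $K_4$ and each of $H_1,H_3,H_5,H_7,H_{14},H_{15}$ contain an adjacent pair of degree-$\ge3$ vertices; and in $G_{2k}$ the two support vertices lie at distance $2k-3$. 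Thus every listed graph satisfies the hypothesis, and it remains to prove that these are the only ones.

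So let $G$ be $4$-$\es$-vertex-critical with two vertices of degree at least $3$ at odd distance; by Lemma~\ref{spc5}, $G$ is connected. I would first strip away long cycles. If $G$ contains a cycle of length $n\ge5$ with $n\not\equiv0\bmod4$, then $G$ is one of the graphs listed in Theorem~\ref{cnnn}; discarding the $2$-regular cycles $C_n$, which violate the hypothesis, leaves $G\in\mathcal{C}_5\cup\mathcal{C}_6$. I then rule out cycles of length $4k$ with $k\ge2$ --- a point I return to below --- so that afterwards every cycle of $G$ has length at most $4$, or $G$ is a tree.

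The heart of the argument is an extremal choice. Among all pairs of degree-$\ge3$ vertices at odd distance, pick $u,w$ minimizing $d:=d_G(u,w)$, and fix a shortest $u,w$-path $Q=x_0x_1\cdots x_d$ with $x_0=u$, $x_d=w$. A parity argument shows every internal $x_i$, $0<i<d$, has degree $2$: for odd $i$ the vertices $u,x_i$ are at odd distance $i<d$, and for even $i$ the vertices $x_i,w$ are at odd distance $d-i<d$, so a degree-$\ge3$ internal vertex would violate minimality. The same reasoning shows that when $d\ge3$ \emph{every} neighbor of $u$ and of $w$ has degree at most $2$, as a degree-$\ge3$ neighbor would give an odd pair at distance $1<d$. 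If $d\ge3$, this degree restriction together with the absence of cycles of length $\ge5$ confines the branches at $u$ and $w$ to short trees, and criticality lets me exhibit $\es$-packing $3$-colorings that exclude branches of length $\ge2$, triangles at $u$ or $w$, and a third branch; the forcing used in the proof of Lemma~\ref{spc3} --- two leaves at a support vertex drive its color into $\{2,3\}$ --- shows two pendant neighbors are needed. This pins $G$ to a central path with exactly two pendant leaves at each end, i.e.\ $G=G_{2k}$.

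It remains to treat $d=1$, where $u,w$ are adjacent degree-$\ge3$ vertices; this is the main obstacle, and I would handle it by a case analysis in the spirit of Lemmas~\ref{n34}, \ref{spcthm2}, and~\ref{spca}. I split according to the length of a longest cycle through the edge $uw$ --- now $3$, $4$, or none, since longer cycles were removed --- and according to the external neighborhoods of $u$ and $w$. In each configuration the obstructions ``$H_4\subseteq G-v$'' and ``$C_k\subseteq G-v$ for some $k\ge5$'', combined with explicit $3$-colorings supplied by criticality, either force one of the already-classified $4$-$\es$-vertex-critical graphs to occur as a \emph{spanning} subgraph, so that $G$ equals it with no further edge addable, or produce a contradiction; the survivors are exactly $K_4,H_1,H_3,H_5,H_7,H_{14},H_{15}$. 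The difficulty here is that, two high-degree vertices being adjacent, many short-cycle and attachment patterns are a priori admissible, so their elimination demands a sustained interplay between building $3$-colorings and recognizing classified critical subgraphs; a secondary delicate point is the uniform exclusion of $4$-divisible cycles of length at least $8$, which is not covered directly by Theorem~\ref{cnnn} or Lemma~\ref{GCn} and must be argued by hand from $\es(C_{4k})=3$ and criticality.
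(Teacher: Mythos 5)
There is a genuine gap, in two places. First, your plan makes the global exclusion of cycles of length $4k$, $k\ge2$, a prerequisite: both your $d\ge3$ analysis (``absence of cycles of length $\ge5$'') and your $d=1$ analysis (``longer cycles were removed'') lean on it, yet you never supply the argument --- you defer it and then concede it ``must be argued by hand from $\es(C_{4k})=3$ and criticality.'' That sketch cannot work as stated: precisely because $\es(C_{4k})=3<4$, finding $C_{4k}\subseteq G-x$ contradicts nothing, so criticality gives no leverage against $4$-divisible cycles in isolation; excluding them seems to require the very structural analysis you want them removed for, which is circular. The paper never performs such a global exclusion. Instead it lets the spanning-subgraph principle do the bounding inside each configuration (if $H\subseteq G$ is itself $4$-$\es$-vertex-critical, then $H$ must be spanning, since otherwise $\es(G-x)\ge\es(H)=4$ for some $x\notin V(H)$), so $n(G)$ is pinned down before long cycles can occur; the one place a $4$-divisible cycle genuinely arises (Case 1, when the edge $v_1v_{2k}$ closes a cycle $C_{2k}$ with $2k\equiv 0\bmod 4$) is killed locally by exhibiting a copy of $G_6$ inside $G-v_4$, not by any uniform cycle theorem.

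Second --- and this is the larger omission --- the $d=1$ case, which constitutes the bulk of the paper's proof, is in your write-up only a description of intended methods (``in the spirit of Lemmas~\ref{n34}, \ref{spcthm2}, and~\ref{spca}''), not a proof. The paper carries it out concretely: after disposing of the cases where $G$ contains a member of $\mathcal{C}_{s_4}$, or $H_2$, or $H_4$ (each forced to be spanning), it splits on $|N_G(a)\cap N_G(c)|\ge 2$, $=1$, $=0$ for the adjacent high-degree pair $a,c$, and in each branch either builds an explicit $S$-packing $3$-coloring (contradicting $\es(G)=4$) or forces one of $K_4,H_1,H_3,H_5,H_7,H_{14},H_{15}$ as a spanning subgraph and enumerates the few addable edges. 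Your $d\ge3$ case shows the same incompleteness: in place of your unexecuted ``colorings that exclude branches of length $\ge2$, triangles, and a third branch,'' the paper simply notes that minimality forces $N_G(u)\cap N_G(v)=\emptyset$ and degree $2$ on all internal path vertices, so $G_{2k}\subseteq G$; Lemma~\ref{spc3} then makes $G_{2k}$ spanning, after which only the three edges $v_1v'_2$, $v_1v_{2k}$, $v_1v_{2k-1}$ need checking. Your extremal setup (a minimal odd-distance pair with the parity argument for internal degree-$2$ vertices) does match the paper's opening of Case 1, so the skeleton is right; but as it stands the proposal is a plan whose two load-bearing steps are missing, and one of them (the cycle exclusion) is aimed in a direction that would not succeed.
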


\begin{proof}
Let $u,v\in V(G)$ with $d_G(u),d_G(v)\ge3$ such that $d_G(u,v)=\ell$ is odd and as small as possible. We consider the following two cases.

\medskip\noindent
{\bf Case 1}: $\ell\ge3$. \\
By the choice of $u$ and $v$, $N_G(u)\cap N_G(v)=\emptyset$ and each vertex on a shortest $u,v$-path has degree $2$ in $G$.  Therefore $G_{\ell+3}= G_{2k}$ is a spanning subgraph of $G$, where $k = \frac{\ell+3}{2}$  by Lemma~\ref{spc3}.  Let the vertices of $G_{2k}$ be denoted as in Lemma~\ref{spc3} with $u=v_2$ and $v=v_{2k-1}$. By symmetry, only some of the edges $v_1v'_{2}$, $v_1v_{2k}$, and $v_1v_{2k-1}$ can possibly be added to $G_{2k}$. If $v_1v'_{2}\in E(G)$, then $H_4\subseteq G-v_{2k}$, a contradiction. If $v_1v_{2k}\in E(G)$, then $C_{2k}\subseteq G$. Further, we have $2k\equiv  0\bmod4\ge8$, for otherwise $\chi_{S}(C_{2k})=4$
by Theorem~\ref{cnnn} and $C_{2k}\subseteq G-v'_2$. Then we can find a copy of $G_6$ consisting of $v_3v_2v_1v_{2k}v_{2k-1}v_{2k-2}$ and the pendent vertices $v'_{2}$ and $v_{2k-1}'$  contained in $G-v_4$, which also leads to a contradiction. If $v_1v_{2k-1}\in E(G)$, then there is a cycle $C_{2k-1}\subseteq G-v'_{2}$ with $2k-1\not\equiv0\pmod4>3$, again a contradiction. We conclude that in Case 1, $G = G_{2k}$.

\medskip\noindent
{\bf Case 2}: $\ell=1$. \\
We claim that in this case, $G\in \{K_4\} \cup {\cal H}_{1, 3, 5, 7, 14, 15}\cup \mathcal{C}_5\cup \mathcal{C}_6$.

If $G$ contains a cycle $C$ from $\mathcal{C}_{s_4}$, then by Theorem~\ref{cnnn}, $C$ is a spanning subgraph of $G$. Since there are two vertices of degree at least 3 which are of distance 1 in $G$, we have $G\in \mathcal{C}_5\cup \mathcal{C}_6$. Suppose $G$ contains $H_4$ as a subgraph. Then $H_4$ is a spanning subgraph of $G$. Since $G$ has two vertices of degree at least 3, $G\in \mathcal{C}_5\cup \{H_1\}$. By the same argument, if $G$ contains $H_2$ as a spanning subgraph, then $G\in \mathcal{C}_5$. Therefore we may assume $G$ does not contain a graph from $\mathcal{C}_{s_4}\cup \{H_2,H_4\}$ as a subgraph. Let $a=u$ and $c=v$.

Suppose that $|N_G(a)\cap N_G(c)|\ge 2$. If $d_G(x)=2$ for any $x\in N_G(a)\cap N_G(c)$, then $V(G)\setminus\{a,c\}$ is an independent set in $G$  since $G$ does not contain $H_2$ or $H_4$ as a subgraph, and so a coloring $\varphi$ of $G$ with $\varphi(V(G)\setminus\{a,c\})=1$, $\varphi(a)=2$ and $\varphi(c)=3$ is an $S$-packing 3-coloring, a contradiction. Then either $bd \in E(G)$ for some $b,d\in N_G(a)\cap N_G(c)$ and so $G= K_4$, or there is a vertex $x\in N_G(a)\cap N_G(c)$ such that $N_G(x)\setminus (\{a,c\}\cup V(N_G(a)\cap N_G(c)))\neq\emptyset$ and so $H_1\subseteq G$. Since $G$ contains no cycle from $\mathcal{C}_{s_4}$, we infer that no more edges can be added to $H_1$. Hence $G = H_1$.

Suppose that $|N_G(a)\cap N_G(c)|=1$, and let $b\in N_G(a)\cap N_G(c)$. If $d_G(b)=2$, then since $G$ does not contain $H_2$ and $H_4$ as a subgraph,  a coloring $\varphi$ of $G$ with  $\varphi(V(G)\setminus\{a,c\})=1$, $\varphi(a)=2$, and $\varphi(c)=3$ is an $S$-packing 3-coloring, a contradiction. Therefore, $d_G(b)\ge3$.
If $|N_G(b)\cap N_G(a)|\ge2$ or $|N_G(b)\cap N_G(c)|\ge2$, then $G\in \{K_4,H_1\}$ because $d_G(a)\ge3$, $d_G(c)\ge3$, and $ac\in E(G)$. If $N_G(b)\cap N_G(a)=c$ and $N_G(b)\cap N_G(c)=a$, then $H_3\subseteq G$  since $d_G(z)\ge 3$ for $z\in \{a,b,c\}$. Let $d,e,f$ be the three vertices of $H_3$ as shown in Fig.~\ref{spcfig2}. If $af\in E(G)$, then $H_1\subseteq G-d$, a contradiction. If $df\in E(G)$, then $C_5\subseteq G-e$, again a contradiction. Therefore $G= H_3$.

Lastly, consider the case when $N_G(a)\cap N_G(c)=\emptyset$.  If $d_G(w)=1$ for each $w\in (N_G(a)\cup N_G(c))\setminus \{a,c\}$, then a mapping $\varphi$ with $\varphi(V(G)\setminus \{a,c\})=1$, $\varphi(a)=2$, and $\varphi(c)=3$ is an $S$-packing 3-coloring of $G$, contradicting the fact that $\es(G)=4$. Let $x_1\neq x_2\in N_G(a)\setminus\{c\}$ and $y_1\ne y_2\in N_G(c)\setminus\{a\}$. Without loss of generality assume that $N_G(x_1)\setminus \{a\}\neq \emptyset$.  If $x_1x_2\in E(G)$, then $H_4\subseteq G-y_2$, a contradiction. Hence $x_1x_2\notin E(G)$ and $y_1y_2\notin E(G)$ by symmetry.
If $x_1y_1\in E(G)$, then $H_5\subseteq G$. Moreover, $H_5$ is a spanning subgraph of $G$ by Lemma~\ref{KCH}. If $x_1y_1\in E(G)$ and $x_2y_2\in E(G)$, then $C_6$ is a proper subgraph of $G$, again a contradiction. If $x_1y_1\in E(G)$ and only one of $x_1y_2$ and $x_2y_1$ is contained in $G$, then $H_7\subseteq G$. Since there is no more edge which can be added to $H_{7}$, we get $G\in {\cal H}_{5,7}$ when $x_1y_1\in E(G)$.  If there is a vertex $x_1'\in N(x_1)\setminus \{a, x_2,y_1,y_2\}$, then $H_{14}$ is a spanning subgraph of $G$ by Lemma~\ref{HH145}.  If one of the edges $x_1'a$, $x_1x_2$, and $y_1y_2$ is contained in $G$, there is a vertex $y\in V(G)$ such that $H_4\subseteq G-y$, a contradiction. If one of the edges $x_1'c$, $x_1y_1$, $x_1y_2$, $y_1x_2$, and $x_2y_2$ is contained in $G$, there is a vertex $y\in V(G)$ such that $H_5\subseteq G-y$, a contradiction. If $x_1'y_i\in E(G)$, then $C_5\subseteq G-x_2$, a contradiction.  Since  $ay_i,cx_i\notin E(G)$ for $i\in[2]$, only  $x_1'x_2$ can be  possibly contained in $G$,  and $H_{15}\subseteq G$ when  $x_1'x_2 \in E(G)$. Moreover, since $H_{15}$ is $4$-$\es$-vertex-critical, and there is no more edge can be added to $G$, we conclude that $G\in {\cal H}_{14, 15}$ when $N(x_1)\setminus \{a, x_2,y_1,y_2\}\neq \emptyset$.
\end{proof}

Theorems~\ref{aod} and~\ref{h2h4} are combined into the following final result of this paper.

\begin{cor}\label{spccor1}
Let $S\in \d133$. Then a graph $G$ is $4$-$\es$-vertex-critical if and only if
$$G\in \{K_4\} \cup {\cal H}_{1-5, 7, 14, 15}
\cup \mathcal{C}_5 \cup \mathcal{C}_6
\cup \mathcal{C}_{s_4} \cup \{G_{2k}:\ k\ge 3\}\,.$$
\end{cor}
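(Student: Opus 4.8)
The plan is to establish Corollary~\ref{spccor1} purely by combining the two structural theorems already proved for $S\in\d133$, namely Theorem~\ref{h2h4} and Theorem~\ref{aod}, and then reconciling their outputs. The key observation is that these two theorems partition all possible $4$-$\es$-vertex-critical graphs according to a clean dichotomy on the vertices of degree at least $3$: either every pair of such vertices lies at an \emph{even} distance in $G$, or some pair lies at an \emph{odd} distance. Since $G$ is connected by Lemma~\ref{spc5}, every graph falls into exactly one of these two cases, so the union of the two conclusion-sets is exactly the set of all $4$-$\es$-vertex-critical graphs. This is the heart of the argument and requires almost no new computation.

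First I would verify the ``only if'' direction. Let $G$ be $4$-$\es$-vertex-critical for $S\in\d133$. If all vertices of degree at least $3$ are pairwise at even distance, Theorem~\ref{h2h4} gives $G\in\{H_2,H_4\}\cup\mathcal{C}_{s_4}$. Otherwise, there exist two vertices of degree at least $3$ at odd distance, so Theorem~\ref{aod} gives $G\in\{K_4\}\cup{\cal H}_{1,3,5,7,14,15}\cup\{G_{2k}:k\ge3\}\cup\mathcal{C}_5\cup\mathcal{C}_6$. Taking the union of these two lists yields
$$\{K_4\}\cup{\cal H}_{1-5,7,14,15}\cup\mathcal{C}_5\cup\mathcal{C}_6\cup\mathcal{C}_{s_4}\cup\{G_{2k}:k\ge3\}\,,$$
which is precisely the claimed set. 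Here I would note that $\{H_2,H_4\}$ merges with ${\cal H}_{1,3,5,7,14,15}$ to give the consolidated index set ${\cal H}_{1-5,7,14,15}$, and that the cycle families $\mathcal{C}_5,\mathcal{C}_6$ appearing in both theorems coincide, so no graph is double-counted in a way that changes the final list.

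For the ``if'' direction I would invoke the forward directions already contained in the two theorems. Theorem~\ref{h2h4} states (and its proof verifies via Lemmas~\ref{pcn} and~\ref{KCH}) that every graph in $\{H_2,H_4\}\cup\mathcal{C}_{s_4}$ is $4$-$\es$-vertex-critical. Likewise the opening line of the proof of Theorem~\ref{aod}, together with Lemmas~\ref{KCH}, \ref{spc3}, and~\ref{HH145}, confirms that every graph in $\{K_4\}\cup{\cal H}_{1,3,5,7,14,15}\cup\{G_{2k}:k\ge3\}\cup\mathcal{C}_5\cup\mathcal{C}_6$ is $4$-$\es$-vertex-critical. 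Since each graph in the union has already been shown critical, the ``if'' direction follows immediately.

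The main obstacle, such as it is, is bookkeeping rather than mathematics: one must check that the two index sets and cycle families glue together without gaps or overlaps that would alter the final enumeration. In particular I would double-check that $H_{14}$ and $H_{15}$ are handled (they appear in the odd-distance case via Lemma~\ref{HH145} and are absent from Theorem~\ref{h2h4}), and that the family $\mathcal{C}_{s_4}$ of long cycles is exactly the one produced by the even-distance case while $\mathcal{C}_5,\mathcal{C}_6$ are the small sporadic cycle-augmentations produced in both. Once the partition into even/odd distance is seen to be exhaustive and the two lists are correctly unioned, the corollary is an immediate consequence of the two theorems and needs no further argument.
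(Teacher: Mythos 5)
Your proposal matches the paper's approach exactly: the paper gives no separate proof of Corollary~\ref{spccor1}, stating only that it is obtained by combining Theorems~\ref{h2h4} and~\ref{aod} via the same exhaustive even/odd-distance dichotomy on vertices of degree at least $3$ (with criticality of each listed graph already supplied by Lemmas~\ref{KCH}, \ref{pcn}, \ref{spc3}, and~\ref{HH145}), and your union $\{H_2,H_4\}\cup{\cal H}_{1,3,5,7,14,15}={\cal H}_{1-5,7,14,15}$ is computed correctly. One tiny bookkeeping slip: the families $\mathcal{C}_5$ and $\mathcal{C}_6$ appear only in the conclusion of Theorem~\ref{aod}, not ``in both theorems'' --- Theorem~\ref{h2h4} instead contributes the plain cycles $C_5, C_6$ as members of $\mathcal{C}_{s_4}$ --- but this misdescription does not affect the correctness of the final union.
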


\newpage
\section*{Acknowledgements}

Hui Lei was partially supported by the National
Natural Science Foundation of China (No. 12001296) and the Fundamental Research Funds
for the Central Universities, Nankai University (Nos. 63201163 and 63211093). Xiaopan Lian and Yongtang Shi were partially supported by the National Natural Science Foundation of China (Nos. 12161141006 and
12111540249), the Natural Science Foundation of Tianjin (Nos. 20JCJQJC00090 and 20JCZD-JC00840). Sandi Klav\v{z}ar acknowledges the financial support from the Slovenian Research Agency (research core funding P1-0297, and projects N1-0285, J1-2452).


\begin{thebibliography}{99}

\bibitem{bidine-2021}
E.~Bidine, T.~Gadi, M.~Kchikech,
Independence number and packing coloring of generalized Mycielski graphs,
Discuss.\ Math.\ Graph Theory 41 (2021) 725--747.

\bibitem{bozovic-2021}
D.~Bo\v{z}ovi\'{c}, I.~Peterin,
A note on the packing chromatic number of lexicographic products,
Discrete Appl.\ Math.\ 293 (2021) 34--37.

\bibitem {bresar-2019++}
B.~Bre\v{s}ar, J.~Ferme,
Graphs that are critical for the packing chromatic number,
Discuss.\ Math.\ Graph Theory 42 (2022) 569--589.

\bibitem{bresar-2021}
B.~Bre\v{s}ar, J.~Ferme, K.~Kamenick\'{a},
$S$-packing colorings of distance graphs $G(\Bbb{Z}, \{2,t\} )$,
Discrete Appl.\ Math.\ 298 (2021) 143--154.

\bibitem{bresar-2020}
B.~Bre\v{s}ar, N.~Gastineau, O.~Togni,
Packing colorings of subcubic outerplanar graphs,
Aequationes Math.\ 94 (2020) 945--967.

\bibitem {BJS}
B.~Bre\v{s}ar, J.~Ferme, S.~Klav\v{z}ar, D.F.~Rall,
A survey on packing colorings,
Discuss.\ Math.\  Graph Theory 40 (2020) 923--970.

\bibitem {BSD}
B.~Bre\v{s}ar, S.~Klav\v{z}ar, D.F.~Rall,
On the packing chromatic number of Cartesian products, hexagonal lattice, and trees,
Discrete Appl.\ Math.\ 155 (2007) 2303--2311.

\bibitem{deng-2021}
F.~Deng, Z.~Shao, A.~Vesel,
On the packing coloring of base-$3$ Sierpi\'{n}ski graphs and $H$-graphs,
Aequationes Math.\ 95 (2021) 329--341.

\bibitem {ferme-2021+}
J.~Ferme,
A characterization of $4$-$\chi_\rho$-(vertex-)critical graphs,
Filomat 36 (2022) 6481--6501.

\bibitem{fresan-2021}
J.~Fres\'{a}n-Figueroa, D.~Gonz\'{a}lez-Moreno, M.~Olsen,
On the packing chromatic number of Moore graphs,
Discrete Appl.\ Math.\ 289 (2021) 185--193.

\bibitem {gastineau-2015a}
N.~Gastineau,
Dichotomies properties on computational complexity of $S$-packing coloring problems,
Discrete Math.\ f338 (2015) 1029--1041.

\bibitem {gastineau-2019}
N.~Gastineau, O.~Togni,
On $S$-packing edge-colorings of cubic graphs,
Discrete Appl.\ Math.\ 259 (2019) 63--75.

\bibitem {goddard-2008}
W.~Goddard, S.M.~Hedetniemi, S.T.~Hedetniemi, J.M.~Harris, D.F.~Rall,
Broadcast chromatic numbers of graphs,
Ars Combin.\ 86 (2008) 33--49.

\bibitem {goddard-2012}
W.~Goddard, H.~Xu,
The $S$-packing chromatic number of a graph,
Discuss.\ Math. Graph Theory 32 (2012) 795--806.

\bibitem{goddard-2014}
W.~Goddard, H.~Xu,
A note on $S$-packing colorings of lattices,
Discrete Appl.\ Math.\ 166 (2014) 255--262.

\bibitem {holub-2020}
P.~Holub, M.~Jakovac, S.~Klav\v{z}ar,
$S$-packing chromatic vertex-critical graphs,
Discrete Appl.\ Math.\ 285 (2020) 119--127.

\bibitem{klavzar-2019}
S.~Klav\v{z}ar, D.F.~Rall,
Packing chromatic vertex-critical graphs,
Discrete Math.\ Theor.\ Comput.\ Sci.\  21(3) (2019) paper \#8, 18 pp.

\bibitem{kostochka-2021}
A.~Kostochka, X.~Liu,
Packing $(1,1,2,4)$-coloring of subcubic outerplanar graphs,
Discrete Appl.\ Math.\ 302 (2021) 8--15.

\bibitem{liu-2020}
R.~Liu, X.~Liu, M.~Rolek, G.~Yu,
Packing $(1,1,2,2)$-coloring of some subcubic graphs,
Discrete Appl.\ Math.\ 283 (2020) 626--630.


\bibitem{yang-2023}
W.~Yang, B.~Wu,
On packing $S$-colorings of subcubic graphs,
Discrete Appl.\ Math.\ 334 (2023) 1--14.

\end{thebibliography}
\end{document}